\def\N{{\mathbb N}}
\def\R{{\mathbb R}}
\newtheorem{lemma}{Lemma}[section]
\newtheorem{theorem}[lemma]{Theorem}
\newtheorem{corollary}[lemma]{Corollary}
\newtheorem{proposition}[lemma]{Proposition}
\title{Neumaier graphs with few eigenvalues}
\author{Aida Abiad\thanks{\texttt{a.abiad.monge@tue.nl},  Department of Mathematics and Computer Science, Eindhoven University of Technology, The Netherlands}\thanks{Department of Mathematics: Analysis, Logic and Discrete Mathematics, Ghent University, Belgium} \quad Bart De Bruyn\thanks{\texttt{Bart.DeBruyn@ugent.be}, Department of Mathematics: Algebra and Geometry, Ghent University, Belgium}  \quad Jozefien D'haeseleer\thanks{\texttt{Jozefien.Dhaeseleer@ugent.be}, Department of Mathematics: Analysis, Logic and Discrete Mathematics, Ghent University, Belgium} \quad Jack H. Koolen\thanks{\texttt{koolen@ustc.edu.cn}, School of Mathematical Sciences, University of Science and Technology of China, Hefei, China}\thanks{CAS Wu Wen-Tsun Key Laboratory of Mathematics, University of Science and Technology of China, Hefei, China}}
\date{}
\begin{document}

\maketitle


\begin{abstract}
A Neumaier graph is a non-complete edge-regular graph  containing a regular  clique. 
In this paper we give some sufficient and necessary conditions for a Neumaier graph to be strongly regular. Further we show that there does not exist Neumaier graphs with exactly four distinct eigenvalues. We also determine the Neumaier graphs with smallest eigenvalue $-2$.

\end{abstract}


\section{Introduction}

A regular graph is called \emph{edge-regular} if any two adjacent vertices have the same number of common neighbours. A \emph{regular clique} in a regular graph is a clique $C$ having the property that every vertex outside $C$ is adjacent to the same positive number of vertices of $C$. In the early 1980s, Neumaier \cite{Neumaier1981} studied regular cliques in edge-regular graphs, and a certain class of designs whose point graphs are strongly regular and contain regular cliques. He then posed the problem of whether there exists a non-complete, edge-regular, non-strongly regular graph containing a regular clique, [\cite{Neumaier1981}, page 248]. 

We define a \emph{Neumaier graph} as a non-complete edge-regular graph containing a regular clique. A Neumaier graph that is not a strongly regular graph is called a \emph{strictly Neumaier graph}. 

Neumaier graphs have received quite a lot of attention recently. Greaves and  Koolen  \cite{GK2018} provided the first infinite family of  Neumaier graphs, using cyclotomic numbers, showing that strictly Neumaier graphs exist. Goryainov and Shalaginov \cite{GoSha} classified  the Cayley-Deza graphs with fewer than 60 vertices, and it turned out that there are four strictly Neumaier graphs with 20 vertices among them.
Greaves and Koolen \cite{GK2019} found a new infinite family of strictly Neumaier graphs containing some of the examples found by Goryainov and Shalaginov.

Evans, Goryainov and Panasenko \cite{EGP2019} obtained some new infinite families of strictly Neumaier graphs by switching an affine polar graph over GF$(2^n)$ where $n$ is a positive integer.
They also showed some general results on Neumaier graphs and their feasible parameter tuples, and presented an application of such results to determine the smallest non-strongly regular Neumaier graph, answering some questions posted by Greaves and Koolen in \cite{GK2018}.

We continue the study of Neumaier graphs by studying some of their spectral properties. In particular, in this paper we take a closer look at Neumaier graphs with few distinct eigenvalues. In Section \ref{sec:Neumaiersrg} we present several combinatorial and spectral conditions under which a Neumaier graph is a strongly regular graph. We also determine the Neumaier graphs with smallest eigenvalue $-2$. In Section \ref{Neumaier4distinctev} we show some feasibility conditions of Neumaier graphs with four distinct eigenvalues and we use them to prove that there does  not exist strictly Neumaier graphs with exactly four distinct eigenvalues.

\section{Strongly regular Neumaier graphs}\label{sec:Neumaiersrg}

In this section we study under which conditions Neumaier graphs have exactly three distinct eigenvalues, that is, we provide several new characterizations of strongly regular Neumaier graphs. A result in this direction was already obtained by Neumaier in Corollary 2.4 of \cite{Neumaier1981}. Recall that not all Neumaier graphs are strongly regular graphs, as it was shown with the constructions by Greaves and Koolen \cite{GK2018,GK2019}, and Evans, Goryainov and Panasenko \cite{EGP2019}.

\subsection{A combinatorial characterization}\label{Sec:Neumaier1file}

Let $\Gamma=(V,E)$ be a non-complete edge-regular graph on $v > 0$ vertices having valency $k \geq 1$. We denote by $\lambda$ the constant number of triangles through a given edge. If $\Gamma$ is a complete multipartite graph, then we denote the total number of its multipartite parts by $s+1$. If $\Gamma$ is not a complete multipartite graph, then we show in Lemma \ref{lem1} that $v + \lambda -2k \not= 0$. In this case, we define $s$ to be the following number:
\[ s := \frac{-(k^2-k+\lambda-v \lambda) + \sqrt{(k^2-k+\lambda-v\lambda)^2 + 4k(v-k-1)(v+\lambda-2k)}}{2(v+\lambda-2k)}.  \]
In Propositions \ref{prop2} and \ref{prop3} below, we show the following.
\begin{itemize}
\item Every clique $C$ of $\Gamma$ has order at most $s+1$, with equality if and only if $C$ is a regular clique.
\item If $\Gamma$ has a regular clique (of order $s+1$), then $v \not= s+1$ (as $\Gamma$ is not complete) and every vertex outside $C$ is adjacent to precisely $e:=\frac{(s+1)(k-s)}{v-(s+1)}$ vertices of $C$.
\end{itemize}
Note that if $\Gamma$ is a Neumaier graph, then there are (regular) cliques of order $s+1$. The main goal of this subsection is to prove the following characterization of a particular family of strongly regular Neumaier graphs. 

\begin{theorem}\label{mainresultNeumaier1file}
If $\Gamma$ is a Neumaier graph with the property that every clique of order $e+1$ is contained in a clique of order $s+1$, then $\Gamma$ is strongly regular.
\end{theorem}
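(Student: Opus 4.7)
The plan is to use the clique-extension hypothesis to pin down the incidence structure of regular $(s+1)$-cliques in $\Gamma$ and then translate that uniformity into strong regularity.

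First, by Proposition \ref{prop2}, every clique of order $s+1$ is automatically regular, so the hypothesis reads: every $(e+1)$-clique is contained in some regular clique of order $s+1$. I would begin by showing that this extension is unique. Indeed, if an $(e+1)$-clique $A$ were contained in two distinct regular cliques $C \neq C'$, then any $y \in C \setminus C'$ would satisfy $|N(y) \cap C'| \geq |A| = e+1$, contradicting the fact that a vertex outside a regular clique has exactly $e$ neighbours in it. As a direct consequence, any two distinct regular cliques meet in at most $e$ vertices.

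Next I would show that every edge $xy$ of $\Gamma$ lies in at least one regular clique: pick any regular clique $C$ through $x$ (existing by the Neumaier hypothesis); if $y \notin C$, then $\{x,y\} \cup (N(y)\cap C \setminus \{x\})$ is an $(e+1)$-clique, which by hypothesis extends to a regular clique through both $x$ and $y$. Writing $\rho(x)$ for the number of regular cliques through $x$ and $t(xy)$ for the number through the edge $xy$, one uses the identity $A\chi_C = e\mathbf{1} + (s-e)\chi_C$ valid for the characteristic vector of each regular clique $C$. Summing over all regular cliques and reading off the $(x,y)$-entry for non-adjacent $x,y$ yields
\[ \sum_{z \in N(x)\cap N(y)} t(yz) \;=\; e\,\rho(y), \]
and an analogous identity for adjacent pairs. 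In particular, as soon as $\rho$ and $t$ are shown to be constant functions, $|N(x)\cap N(y)| = e\rho/t$ for every non-adjacent pair, and $\Gamma$ is strongly regular.

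The heart of the argument, and its main obstacle, is establishing this constancy. The uniqueness from the first step makes the incidence between $(e+1)$-cliques and regular cliques rigid, so counts of regular cliques through any subclique are controlled by counts of the corresponding $(e+1)$-cliques. I would combine this rigidity with the identity above, its transpose (obtained by swapping the roles of $x$ and $y$), and the analogous identity for adjacent pairs to propagate constancy from local counts to $\rho$ and $t$. Equivalently, one can argue matrix-theoretically that $M := \sum_C \chi_C \chi_C^\top$ and $A$ satisfy $(A-(s-e)I)\,M = e\,\mathbf{1}\nu^\top$ with $\nu = M\mathbf{1} = (\rho(x))_x$, and deduce from the symmetry $M = M^\top$ (together with the non-adjacent and adjacent entry identities) that $\nu$ is a scalar multiple of $\mathbf{1}$, after which the constancy of $t$ follows. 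Either route then yields the desired formula for $|N(x)\cap N(y)|$ and completes the proof.
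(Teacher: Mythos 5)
Your overall architecture matches the paper's: uniqueness of the $(s+1)$-clique through an $(e+1)$-clique (the paper's Lemma \ref{lem5}), coverage of all edges by $(s+1)$-cliques, and then deducing strong regularity once the number $\rho(x)$ of $(s+1)$-cliques through a vertex and the number $t(xy)$ through an edge are constant (the paper does this last step via diameter $2$ and Proposition A2 of \cite{bdb-Suz}, which is essentially your identity $\sum_{z\in N(x)\cap N(y)}t(zy)=e\rho(y)$). But the step you yourself call the heart of the argument --- the constancy of $t$ and $\rho$ --- is not proved; it is only asserted that uniqueness makes the incidence ``rigid'' enough to ``propagate constancy''. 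This is exactly where the paper expends its real effort: Lemmas \ref{lem8} and \ref{lem9} compare two edges $xy_1$, $xy_2$ through a common vertex and, using the unique extension of $(e+1)$-cliques, set up explicit correspondences (matching a clique $C_1\supseteq xy_1$ with the unique $(s+1)$-clique containing $\{y_2\}\cup(y_2^\perp\cap C_1)$ when $y_1\not\sim y_2$, and a more delicate count through the auxiliary sets $A_1$, $A_2$ when $y_1\sim y_2$), then use connectivity to make $t$ globally constant, after which $\rho$ is constant by double counting against $k$-regularity. None of this is replaced by your sketch.

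Your proposed matrix-theoretic shortcut does not close the gap. From $M=\sum_C\chi_C\chi_C^\top$ one gets $(A-(s-e)I)M=e\,\mathbf{1}\nu^\top$ with $\nu=(\rho(x))_x$ (note $M\mathbf{1}=(s+1)\nu$, not $\nu$), and using $M=M^\top$ the most you can extract is
\[
(A-(s-e)I)M(A-(s-e)I)=e\,\mathbf{1}\,\bigl((A-(s-e)I)\nu\bigr)^\top=e\,\bigl((A-(s-e)I)\nu\bigr)\,\mathbf{1}^\top,
\]
which forces $(A-(s-e)I)\nu=c\mathbf{1}$ for some constant $c$, i.e.\ $\nu$ is constant only modulo the $(s-e)$-eigenspace of $A$; symmetry alone does not make $\nu$ a multiple of $\mathbf{1}$, and the entrywise identities you list for adjacent and non-adjacent pairs are consequences of this same relation, so they add nothing that pins $\rho$ or $t$ down. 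A second, smaller gap: the Neumaier hypothesis gives you one regular clique in the graph, not one through every vertex, so ``pick any regular clique $C$ through $x$'' needs the extension hypothesis first (if $x\notin C$, extend the $(e+1)$-clique $\{x\}\cup(N(x)\cap C)$), as in the paper's Lemma \ref{lem6}; this is easily repaired, but the missing proof of constancy of the clique counts is the substantive content of the theorem and your argument does not supply it.
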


\medskip \noindent We note that there exist infinite families of Neumaier graphs that have the property mentioned in Theorem \ref{mainresultNeumaier1file}. These include the Neumaier graphs that are complete multipartite graphs and the collinearity graphs of finite generalized quadrangles or finite polar spaces.  

\medskip \noindent We will denote the neighbourhood of a vertex $v$ of $\Gamma$ by $v^\perp = \Gamma(v)$. If $v$ is a vertex, then $\Gamma_i(v)$ with $i \in \N$ denotes the set of vertices at distance $i$ from $v$. If $v$ and $w$ are two vertices of $\Gamma$, then we write $v \sim w$ or $v \not\sim w$ depending on whether $v$ and $w$ are adjacent or not. We denote the edge through two adjacent vertices $v$ and $w$ by $vw$. For any set $X$ of vertices of $\Gamma$, we denote by $\overline{X} := V \setminus X$ its complement in $V$.

\medskip \noindent If $\Gamma$ is a complete multipartite graph, then the fact that $k \geq 1$ implies that the number $(s+1)$ of multipartite parts is at least 2, and the fact that $\Gamma$ is regular and non-complete implies that all multipartite parts have the same order $m \geq 2$. In this case, we have $v = (s+1)m$, $k = sm$, $\lambda = (s-1)m$ and hence $v+\lambda-2k=0$. The latter property is sufficient to characterize complete multipartite graphs as the following lemma shows.

\begin{lemma} \label{lem1}
We have $v+\lambda-2k \geq 0$, with equality if and only if $\Gamma$ is a complete multipartite graph.
\end{lemma}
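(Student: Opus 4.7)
The plan is to prove the inequality by a simple inclusion-exclusion count on the closed union of neighbourhoods of an edge, and then analyse the equality case by showing that the relevant condition forces the non-adjacency relation on $V$ (extended by the diagonal) to be an equivalence relation, which is precisely what characterises complete multipartite graphs.

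First, I would fix an edge $\{u,w\}$ of $\Gamma$. Since $u \sim w$ we have $w \in u^\perp$ and $u \in w^\perp$, so $\{u,w\} \subseteq u^\perp \cup w^\perp \subseteq V$. The intersection $u^\perp \cap w^\perp$ is precisely the set of common neighbours of $u$ and $w$, which has size $\lambda$ by edge-regularity. Inclusion-exclusion gives
\[
|u^\perp \cup w^\perp| = |u^\perp|+|w^\perp|-|u^\perp \cap w^\perp| = 2k-\lambda,
\]
and the containment in $V$ yields $2k-\lambda \leq v$, i.e.\ $v+\lambda-2k \geq 0$, proving the inequality.

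Next, for the equality case: $v+\lambda-2k=0$ iff $u^\perp \cup w^\perp = V$ for \emph{every} edge $\{u,w\}$ of $\Gamma$, equivalently, every vertex $x \in V \setminus \{u,w\}$ satisfies $x \sim u$ or $x \sim w$. Define a relation $\approx$ on $V$ by $x \approx y$ iff $x=y$ or $x \not\sim y$. It is reflexive and symmetric by construction. For transitivity, assume $x \approx y$ and $y \approx z$ with $x,y,z$ pairwise distinct. Then $xy$ and $yz$ are non-edges; if $xz$ were an edge, applying the condition to the edge $\{x,z\}$ and the vertex $y \notin \{x,z\}$ would force $y \sim x$ or $y \sim z$, a contradiction. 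Hence $xz$ is a non-edge, so $x \approx z$, and $\approx$ is an equivalence relation. Its classes give a partition of $V$ in which distinct classes are joined by all possible edges, so $\Gamma$ is complete multipartite.

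Finally, I would check the converse directly: if $\Gamma$ is complete multipartite with parts $V_1,\ldots,V_{s+1}$ all of size $m \geq 2$ (forced by regularity and non-completeness together with $k \geq 1$), then for any edge $\{u,w\}$ with $u \in V_i$, $w \in V_j$, $i \neq j$, any other vertex $x \in V_\ell$ satisfies either $\ell \neq i$ (hence $x \sim u$) or $\ell = i \neq j$ (hence $x \sim w$); so $u^\perp \cup w^\perp = V$, giving $v+\lambda-2k=0$. I do not anticipate a serious obstacle here: the only delicate point is the transitivity argument in the equality case, but the hypothesis applied to the putative edge $xz$ yields the contradiction immediately.
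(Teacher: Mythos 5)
Your proposal is correct and follows essentially the same route as the paper: the inequality via $|u^\perp\cup w^\perp|=2k-\lambda\leq v$ for an edge, and in the equality case showing that non-adjacency is an equivalence relation, hence $\Gamma$ is complete multipartite. The only cosmetic differences are that you verify transitivity directly (the paper argues via equal neighbourhoods of non-adjacent vertices) and that you include the easy converse inside the proof, which the paper disposes of in the discussion preceding the lemma.
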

\begin{proof}
If $xy$ is an edge of $\Gamma$, then $v \geq |x^\perp \cup y^\perp| = 2k-\lambda$, implying that $v+\lambda - 2k \geq 0$. Suppose now that equality holds. Then for every three distinct vertices $x$, $y$ and $z$ for which $x \sim y$, we have that $z$ is adjacent to at least one of $x$, $y$. If $u_1$ and $u_2$ are two non-adjacent vertices and $u_1w$ is an edge, then the fact that $u_1 \not\sim u_2$ implies that $w \sim u_2$. 
So, the neighbourhoods of two vertices of $\Gamma$ are equal if and only if these vertices are non-adjacent. The non-adjacency relation on the vertex set is thus an equivalence relation and $\Gamma$ is a complete multipartite graph.
\end{proof}

\medskip \noindent Neumaier already observed that if an edge-regular graph has a regular clique, say of order $c$, then any regular clique has order $c$ and any clique with order $c$ is regular [\cite{Neumaier1981}, Theorem 1.1]. The following is implicitly shown in \cite{Neumaier1981}, but we add its proof for completeness.

\begin{proposition} \label{prop2}
Suppose $\Gamma$ is not a complete multipartite graph. Every clique $C$ of $\Gamma$ then has order at most $s+1$, and equality occurs if and only if $C$ is a regular clique. In this case, every vertex outside $C$ is adjacent to precisely $e$ vertices of $C$.
\end{proposition}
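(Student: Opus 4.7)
The plan is to set up a double count that converts the proposition into a quadratic inequality in $c := |C|$, and then recognize that quadratic as the one defining $s$.

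First, I would let $c = |C|$ and, for each vertex $y \in V \setminus C$, write $n_y$ for the number of neighbours of $y$ in $C$. Counting edges between $C$ and $V \setminus C$ in two ways gives $\sum_{y \notin C} n_y = c(k - c + 1)$, and counting triangles with one vertex in $V \setminus C$ and the opposite edge inside $C$ (using edge-regularity) gives $\sum_{y \notin C} \binom{n_y}{2} = \binom{c}{2}(\lambda - c + 2)$. Combining these yields a closed form for $\sum_{y \notin C} n_y^2$, namely $c\bigl(k + (c-1)\lambda - (c-1)^2\bigr)$.

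Next, I would apply Cauchy--Schwarz in the form $(v-c)\sum_{y \notin C} n_y^2 \geq \bigl(\sum_{y \notin C} n_y\bigr)^2$, with equality if and only if all $n_y$ are equal, i.e.\ $C$ is a regular clique. Substituting the expressions above and dividing by $c$, this becomes, after setting $t := c-1$,
\[
 (v - t - 1)\bigl(k + t\lambda - t^2\bigr) \;\geq\; (t+1)(k-t)^2.
\]
Expanding and collecting terms, the difference LHS $-$ RHS simplifies to
\[
 f(t) \;:=\; -(v+\lambda-2k)\, t^2 + \bigl(v\lambda - \lambda + k - k^2\bigr)\, t + k(v-k-1),
\]
which (up to sign) is exactly the quadratic whose positive root defines $s$.

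Finally I would analyse $f$. By Lemma~\ref{lem1} and the assumption that $\Gamma$ is not complete multipartite, $v + \lambda - 2k > 0$, so $f$ is a downward parabola. Its product of roots equals $-k(v-k-1)/(v+\lambda-2k) < 0$ (since $k \geq 1$ and $v - k - 1 > 0$ as $\Gamma$ is non-complete), so $f$ has one negative root and one positive root, and the positive root is precisely $s$. Thus for $t \geq 0$ the inequality $f(t) \geq 0$ is equivalent to $t \leq s$, giving $c \leq s+1$. Equality $c = s+1$ forces equality in Cauchy--Schwarz, hence $C$ is a regular clique; conversely, if $C$ is a regular clique then all $n_y$ are equal, Cauchy--Schwarz is tight, $f(t) = 0$, and $t \geq 0$ forces $t = s$, i.e.\ $c = s+1$. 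In the equality case, the common value of $n_y$ is immediately $c(k-c+1)/(v-c) = (s+1)(k-s)/(v-s-1) = e$.

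The only genuine work is matching the quadratic obtained from the double count with the one that defines $s$; after that the sign analysis of $f$, and hence both the bound and the characterization of equality, follow cleanly. I do not anticipate a real obstacle beyond bookkeeping in the expansion.
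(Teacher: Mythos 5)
Your proof is correct and follows essentially the same route as the paper's: the same two double counts (edges and triangles between $C$ and its complement), the same Cauchy--Schwarz step with equality characterizing regular cliques, and the same quadratic in $|C|-1$ whose positive root is $s$. The only cosmetic difference is that you phrase the quadratic as a downward parabola $f(t)\geq 0$ and use the product of roots, where the paper uses the sign of the constant term; the substance is identical.
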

\begin{proof}
Suppose $C$ is a clique of order $\sigma+1 \geq 1$. For every $x \in \overline{C}$, let $e_x$ denote the number of vertices in $C$ adjacent to $x$. Then
\begin{equation}
\sum_{x \in \overline{C}} 1 = v-(\sigma+1).
\end{equation}
Counting in two different ways the pairs $(y,x) \in C \times \overline{C}$ with $y \sim x$ gives
\begin{equation} \label{eq22}
\sum_{x \in \overline{C}} e_x = (\sigma+1)(k-\sigma).
\end{equation}
Counting in two different ways the pairs $(y_1,y_2,x) \in C \times C \times \overline{C}$ with $y_1 \not= y_2$ and $y_1 \sim x \sim y_2$ gives
\begin{equation} \label{eq32}
\sum_{x \in \overline{C}} e_x(e_x-1) = (\sigma+1)\sigma(\lambda-(\sigma-1)).
\end{equation}
From (\ref{eq22}) and (\ref{eq32}), we find
\begin{equation} \label{eq4}
\sum_{x \in \overline{C}} e_x^2 = (\sigma+1)(\sigma \lambda - \sigma^2 +k).
\end{equation}
From the Cauchy-Schwartz inequality, we know that
\[  \Big( \sum e_x \Big)^2 \leq \Big( \sum 1 \Big) \cdot \Big( \sum e_x^2  \Big),   \]
with equality if and only if all $e_x$'s are equal, i.e., if and only if $\{ C,\overline{C} \}$ is an equitable partition. We thus find
\[ (\sigma+1)^2(k-\sigma)^2 \leq (v-(\sigma+1))(\sigma+1)(k+\sigma \lambda-\sigma^2), \]
\[ (\sigma+1)(k^2-2\sigma k+\sigma^2) \leq (v-\sigma-1)(k+\sigma \lambda-\sigma^2), \]
\[ k^2\sigma -2\sigma^2k+\sigma^3+k^2-2\sigma k+\sigma^2 \leq vk+v\lambda \sigma-v\sigma^2 - \sigma k - \sigma^2\lambda +\sigma^3 -k - \sigma \lambda +\sigma^2, \]
i.e.,
\begin{equation} \label{eq52}
(v+\lambda-2k)\sigma^2 + (k^2-k+\lambda-v\lambda)\sigma+(k^2+k-vk) \leq 0.
\end{equation}
By Lemma \ref{lem1}, we know that $v+\lambda-2k > 0$. As $k \geq 1$ and $\Gamma$ is not complete, we also have  $k^2+k-vk=-k(v-(k+1)) < 0$. So, the quadratic equation (\ref{eq52}) in the variable $\sigma$ has two real roots $s_1$ and $s_2$ with $s_1 < 0$ and $s_2>0$. A straightforward computation shows that $s_2=s$. So, (\ref{eq52}) implies that $\sigma \leq s$. By the above we also know that $\sigma=s$ if and only if all $e_x$'s are equal, i.e., if and only if $C$ is a regular clique. If all $e_x$'s are equal, then they are equal to their average value $(\sum e_x) \cdot (\sum 1)^{-1} = \frac{(\sigma+1)(k-\sigma)}{v-\sigma-1}=\frac{(s+1)(k-s)}{v-s-1}=e$.
\end{proof}

\medskip \noindent A similar property as in Proposition \ref{prop2} holds in case $\Gamma$ is a complete multipartite graph. Note that if $\Gamma$ is a complete multipartite graph having $s+1 \geq 2$ parts of size $m \geq 2$, then $e = \frac{(s+1)(k-s)}{v-(s+1)} = \frac{(s+1)(ms-s)}{(s+1)m-(s+1)}=s$. 

\begin{proposition} \label{prop3}
Suppose $\Gamma$ is a complete multipartite graph. Every clique $C$ of $\Gamma$ then has order at most $s+1$, and equality occurs if and only if $C$ is a regular clique. In this case, every vertex outside $C$ is adjacent to precisely $e=s$ vertices of $C$.
\end{proposition}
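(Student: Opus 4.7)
The plan is to exploit the rigid structure of a complete multipartite graph: cliques correspond to transversals of subsets of the multipartite parts. Recall from the discussion preceding Lemma \ref{lem1} that in this setting $\Gamma$ has $s+1 \geq 2$ parts, each of common size $m \geq 2$, with $v=(s+1)m$ and $k=sm$. Since any two vertices in the same part are non-adjacent, every clique contains at most one vertex from each part, so $|C| \leq s+1$ immediately.

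Next, I would handle the equality case. Suppose $|C|=s+1$; then $C$ is a transversal, containing exactly one vertex of each of the $s+1$ parts. For a vertex $x \in \overline{C}$, $x$ lies in some part $P_i$, and $x$ is adjacent to every vertex outside $P_i$; in particular $x$ is adjacent to the unique vertex of $C$ in $P_j$ for each $j \neq i$, and non-adjacent to the vertex of $C$ in $P_i$. Hence $e_x = s$ for every $x \in \overline{C}$, showing $C$ is regular and $e=s$.

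For the converse direction, suppose $C$ is a clique with $|C|=\sigma+1$ where $\sigma+1 \leq s$. Then $C$ meets some $\sigma+1$ of the parts and misses at least one part $P^\ast$. A vertex $x \in P^\ast$ lies in $\overline{C}$ (since $C$ does not meet $P^\ast$) and is adjacent to all $\sigma+1$ vertices of $C$, so $e_x = \sigma+1$. On the other hand, any part $P$ meeting $C$ has $m \geq 2$ vertices, so it contains a vertex $y \in \overline{C}$, and $y$ is adjacent to every vertex of $C$ except the one in $P$, giving $e_y = \sigma$. Since $\sigma \neq \sigma+1$, the clique $C$ is not regular, which completes the proof of the dichotomy.

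No step here is a real obstacle; the argument is a direct transversal count and is essentially forced by the multipartite structure. The only thing worth double-checking is that $m \geq 2$ (guaranteed by $\Gamma$ being non-complete and regular) is actually used, which ensures that each used part contributes at least one external vertex with $e_y = \sigma$, making the comparison between used and unused parts effective.
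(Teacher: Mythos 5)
Your proof is correct and follows essentially the same route as the paper's: bound $|C|$ by the number of parts, observe that a transversal clique gives $e_x=s$ for all outside vertices, and for $0<|C|<s+1$ exhibit one outside vertex (in an unmet part) adjacent to all of $C$ and another (in a met part, using $m\geq 2$) adjacent to all but one vertex of $C$, so $C$ is not regular. The only difference is that you spell out the details the paper leaves implicit.
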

\begin{proof}
Recall that $\Gamma$ has exactly $s+1$ multipartite parts. The obvious bound for $|C|$ is therefore $s+1$. If $|C|=s+1$, then $C$ contains exactly one point of each of the $s+1$ multipartite parts. In this case, every point outside $C$ is adjacent to exactly $s$ vertices of $C$. If $0 < |C| < s+1$, then there is a point outside $C$ that is adjacent to all $|C|$ vertices of $C$ and a point outside $C$ that is adjacent to exactly $|C|-1$ vertices of $C$, proving that $C$ cannot be a regular clique.
\end{proof}

\medskip \noindent Our next aim is to prove Theorem \ref{mainresultNeumaier1file}. So, from now on, we assume that $\Gamma$ is a Neumaier graph with the property that every clique of order $e+1$ is contained in a clique of order $s+1$. Our intention is thus to prove that $\Gamma$ is strongly regular. In the sequel, a clique of order $l$ will shortly be called an {\em $l$-clique}. By Propositions \ref{prop2} and \ref{prop3}, we have:

\begin{lemma} \label{lem4}
Every $(s+1)$-clique is a maximum clique. Hence, $e < s+1$.
\end{lemma}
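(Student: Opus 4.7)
The plan is to derive both assertions directly from Propositions \ref{prop2} and \ref{prop3}, without using the extra hypothesis of Theorem \ref{mainresultNeumaier1file}. The first assertion is essentially a restatement of what those propositions already established: \emph{every} clique of $\Gamma$ has order at most $s+1$, whether $\Gamma$ is complete multipartite or not. Hence an $(s+1)$-clique, if it exists, is automatically of maximum order. Since $\Gamma$ is a Neumaier graph, it contains a regular clique, and by Propositions \ref{prop2} and \ref{prop3} that regular clique has order exactly $s+1$, so maximum cliques do exist and have order $s+1$.

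For the inequality $e < s+1$, I would extend an $(s+1)$-clique by an outside vertex to produce a slightly larger clique and then apply the bound from the first assertion. Concretely, let $C$ be an $(s+1)$-clique of $\Gamma$ and let $x \in \overline{C}$, where $\overline{C}$ is nonempty since $\Gamma$ is non-complete and therefore $v > s+1$. By Propositions \ref{prop2} and \ref{prop3}, $x$ has exactly $e$ neighbours in $C$. Those $e$ vertices of $C$ are pairwise adjacent and each adjacent to $x$, so together with $x$ they form a clique of order $e+1$ in $\Gamma$. By the first assertion this clique has order at most $s+1$, yielding $e+1 \leq s+1$, i.e.\ $e \leq s$ and in particular $e < s+1$.

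There is no real obstacle here: the lemma is a short logical consequence of the preceding two propositions. The only point deserving explicit mention is the non-emptiness of $\overline{C}$, which follows from the standing assumption that $\Gamma$ is non-complete and thus $v \ne s+1$; this is already in place from the opening of the section.
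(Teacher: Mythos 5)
Your proposal is correct and follows the paper's intended route: the paper justifies this lemma simply by citing Propositions \ref{prop2} and \ref{prop3}, and your argument fills in exactly the obvious details (every clique has order at most $s+1$, and adjoining an outside vertex to its $e$ neighbours in an $(s+1)$-clique gives an $(e+1)$-clique, whence $e\leq s$). The same clique-extension observation is in fact used later in the paper, so your write-up matches its spirit precisely.
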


\begin{lemma} \label{lem5}
Every $(e+1)$-clique $H$ is contained in a unique $(s+1)$-clique.
\end{lemma}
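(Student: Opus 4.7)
The plan is to observe that existence is given by hypothesis and then prove uniqueness by a direct contradiction using the regularity of $(s+1)$-cliques.

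First, existence follows immediately from the standing assumption: every $(e{+}1)$-clique is contained in at least one $(s{+}1)$-clique. So the only real content is uniqueness.

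For uniqueness, I would assume toward a contradiction that an $(e{+}1)$-clique $H$ is contained in two distinct $(s{+}1)$-cliques $C_1$ and $C_2$. Since $C_1 \neq C_2$, I can pick a vertex $x \in C_2 \setminus C_1$. Because $H \subseteq C_2$ and $x \in C_2$, the vertex $x$ is adjacent to every one of the $e+1$ vertices of $H$. But $H \subseteq C_1$, so $x$ is adjacent to at least $e+1$ vertices of $C_1$. On the other hand, $C_1$ is an $(s{+}1)$-clique, hence regular by Proposition \ref{prop2} (or Proposition \ref{prop3} in the complete multipartite case), so every vertex outside $C_1$ is adjacent to exactly $e$ vertices of $C_1$. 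Since $x \notin C_1$, this forces $e \geq e+1$, a contradiction.

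There is essentially no obstacle here; the main thing to check is that the inequality $e < s+1$ from Lemma \ref{lem4} ensures $e+1 \leq s+1$ so that the $(e{+}1)$-cliques are genuine subcliques of the $(s{+}1)$-cliques and the counting above is not vacuous. The argument uses only the parameter $e$ guaranteed by regularity of $(s{+}1)$-cliques, so it works uniformly whether or not $\Gamma$ is complete multipartite.
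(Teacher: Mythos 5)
Your proposal is correct and is essentially the paper's own argument: the paper also notes existence from the standing hypothesis and proves uniqueness by observing that a vertex of $C_2 \setminus C_1$ would be adjacent to all $|C_1 \cap C_2| \geq e+1$ vertices of $C_1$, contradicting the regularity of the $(s+1)$-clique $C_1$ established in Propositions \ref{prop2} and \ref{prop3}.
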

\begin{proof} 
This immediately follows from Lemma $1.5(i)$ in \cite{Neumaier1981}. If $C_1$ and $C_2$ were two distinct $(s+1)$-cliques containing $H$, then every vertex of $C_2 \setminus C_1$ is adjacent to all $|C_1 \cap C_2| \geq e+1$ vertices of $C_1 \cap C_2$, an obvious contradiction.
\end{proof}

\begin{lemma} \label{lem6}
Every clique $H$ is contained in an $(s+1)$-clique.
\end{lemma}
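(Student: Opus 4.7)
The plan is to prove the lemma by induction on $|H|$, using repeatedly the following \emph{extension observation}: by Propositions~\ref{prop2} and \ref{prop3}, every vertex outside an $(s+1)$-clique $C$ is adjacent to exactly $e$ vertices of $C$, hence any vertex adjacent to at least $e+1$ vertices of $C$ must already lie in $C$. The base case $|H|=0$ is vacuous, since any $(s+1)$-clique (which exists because $\Gamma$ is a Neumaier graph) contains the empty clique.

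For the inductive step, write $t := |H| \geq 1$ and first handle the regime $t \leq e$. I would pick any vertex $v \in H$ and set $H' := H \setminus \{v\}$. By the induction hypothesis, $H'$ is contained in some $(s+1)$-clique $C$. If $v \in C$ then $H \subseteq C$ and we are done; otherwise the extension observation gives $|C \cap v^\perp| = e$. Since $H$ is a clique, $H' \subseteq v^\perp$, and combining with $H' \subseteq C$ yields $H' \subseteq C \cap v^\perp$. Hence $D := (C \cap v^\perp) \cup \{v\}$ is an $(e+1)$-clique containing $H$, and the hypothesis of the theorem embeds $D$ into an $(s+1)$-clique, which therefore also contains $H$.

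Now suppose $t \geq e+1$; by Lemma~\ref{lem4} we have $t \leq s+1$. Choose any $(e+1)$-subclique $K$ of $H$. By hypothesis, $K$ lies in some $(s+1)$-clique $C$ (unique by Lemma~\ref{lem5}, though uniqueness is not needed). For each $v \in H \setminus K$, the fact that $H$ is a clique means $v$ is adjacent to all $e+1$ vertices of $K \subseteq C$; the extension observation then forces $v \in C$. Consequently $H \subseteq C$, completing the induction.

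The delicate point is the small-clique case $t \leq e$, where one must actually manufacture an $(e+1)$-clique containing $H$ in order to invoke the theorem's hypothesis. The trick is that the trace $C \cap v^\perp$ of the inductive $(s+1)$-clique on the neighbourhood of the excluded vertex $v$ has size exactly $e$ and already contains $H'$, so adjoining $v$ immediately supplies the desired $(e+1)$-clique. The large-clique case, by contrast, is essentially a one-line consequence of the extension observation once a single $(e+1)$-subclique has been extended.
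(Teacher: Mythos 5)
Your proof is correct; both cases rest on the same two facts the paper uses, namely that an $(s+1)$-clique is regular with nexus $e$ (Propositions~\ref{prop2} and \ref{prop3}), so a vertex adjacent to at least $e+1$ of its vertices must lie inside it, and that every $(e+1)$-clique extends to an $(s+1)$-clique by the standing hypothesis. The organization, however, differs from the paper's. The paper gives a single extremal argument: choose an $(s+1)$-clique $C'$ maximizing $|H\cap C'|$, and if some $x\in H\setminus C'$ existed, the $(s+1)$-clique through the $(e+1)$-clique $\{x\}\cup(x^\perp\cap C')$ would meet $H$ in more vertices, a contradiction. You instead induct on $|H|$ with a split at $|H|=e+1$: for small $H$ you peel off a vertex $v$, embed $H\setminus\{v\}$ inductively, and then apply exactly the paper's extension trick to $\{v\}\cup(v^\perp\cap C)$, while for $|H|\geq e+1$ you get a pleasantly direct absorption argument (extend any $(e+1)$-subclique $K$ to an $(s+1)$-clique $C$; every other vertex of $H$ sees all $e+1$ vertices of $K$, hence lies in $C$). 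The paper's maximality argument is shorter and handles all sizes uniformly; your version costs a case distinction and an induction, but makes transparent why large cliques are swallowed whole by a single extension, and it correctly observes that the uniqueness statement of Lemma~\ref{lem5} is never needed, only existence.
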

\begin{proof}
Let $C'$ denote an $(s+1)$-clique for which $|H \cap C'|$ is maximal. We show that $H \subseteq C'$. If this were not the case, then there exists a vertex $x \in H \setminus C'$. The set $x^\perp \cap C'$ then contains $H \cap C'$ and so the unique $(s+1)$-clique $C''$ containing the $(e+1)$-clique $\{ x \} \cup (x^\perp \cap C')$ would have a larger intersection with $H$ than $C'$.
\end{proof}

\begin{lemma} \label{lem7}
The graph $\Gamma$ has diameter $2$.
\end{lemma}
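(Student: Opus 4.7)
The plan is to argue directly that any two non-adjacent distinct vertices admit a common neighbour, by placing one of them in a regular $(s+1)$-clique and using that the other must see at least one vertex of it. Let $x, y$ be distinct non-adjacent vertices. First I would apply Lemma~\ref{lem6} to the trivial clique $\{x\}$ (equivalently, invoke that $\Gamma$ is a Neumaier graph and hence carries regular cliques through every vertex, via Neumaier's observation combined with Lemma~\ref{lem4}) to produce an $(s+1)$-clique $C$ with $x \in C$. Since $y \not\sim x$ and every vertex of $C\setminus\{x\}$ is adjacent to $x$, we have $y \in \overline{C}$.

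Next, Proposition~\ref{prop2} (or Proposition~\ref{prop3} in the complete multipartite case) says that $C$ is a regular clique and that every vertex of $\overline{C}$ is adjacent to exactly $e$ vertices of $C$. Recall from the definition of a regular clique that the number of neighbours an outside vertex has in the clique is \emph{positive}, so $e \geq 1$. Hence $y$ has at least one neighbour $z \in C$, and then $x \sim z \sim y$ gives $d(x,y) \leq 2$. Since $\Gamma$ is non-complete by assumption, the diameter is exactly $2$.

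I do not expect a genuine obstacle here: the lemma is essentially a structural consequence of the mere existence of regular cliques, which cover every vertex and whose outside neighbourhoods are non-empty. The only point to flag explicitly is that $e \geq 1$, which is built into the definition of a regular clique and hence does not require any of the earlier spectral or counting input.
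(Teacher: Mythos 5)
Your proof is correct and follows essentially the same route as the paper: take an $(s+1)$-clique through $x$ via Lemma \ref{lem6}, note $y \notin C$, and use that $y$ is adjacent to $e \geq 1$ vertices of $C$. The only cosmetic difference is that you spell out the appeal to Propositions \ref{prop2}/\ref{prop3} and the positivity of $e$, which the paper leaves implicit.
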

\begin{proof}
Since $\Gamma$ is not a complete graph, it suffices to prove that any two distinct non-adjacent vertices $x$ and $y$ have distance $2$. By Lemma \ref{lem6}, there exists an $(s+1)$-clique $C$ through $x$. Then $y \not\in C$ and $y$ is adjacent to $e \geq 1$ vertices of $C$. So, $x$ and $y$ lie at distance 2 from each other.
\end{proof}

\begin{lemma} \label{lem8}
If $E_1$ and $E_2$ are two edges for which $|E_1 \cap E_2|=1$ and $E_1 \cup E_2$ is not a clique, then they are contained in the same number of $(s+1)$-cliques.
\end{lemma}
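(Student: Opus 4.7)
The plan is to establish a bijection between the set $\mathcal{C}_1$ of $(s+1)$-cliques containing $E_1$ and the set $\mathcal{C}_2$ of $(s+1)$-cliques containing $E_2$. Write $E_1 = \{x, y_1\}$ and $E_2 = \{x, y_2\}$, where $x$ is the common vertex and $y_1 \not\sim y_2$ (this is forced by the hypothesis that $E_1 \cup E_2$ is not a clique).

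Given any $C_1 \in \mathcal{C}_1$, I would first observe that $y_2 \notin C_1$, since $y_2 \not\sim y_1 \in C_1$. By Proposition \ref{prop2} (or Proposition \ref{prop3}), the vertex $y_2$ is adjacent to exactly $e$ vertices of $C_1$. Since $x \in C_1$ and $x \sim y_2$, the set $A_1(C_1) := y_2^\perp \cap (C_1 \setminus \{x\})$ has cardinality exactly $e-1$. Together with $x$ and $y_2$, this gives an $(e+1)$-clique (the vertices of $A_1(C_1)$ are mutually adjacent as they lie in $C_1$, each is adjacent to $x$ for the same reason, and each is adjacent to $y_2$ by definition of $A_1(C_1)$). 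By Lemma \ref{lem5}, this $(e+1)$-clique lies in a unique $(s+1)$-clique $\varphi(C_1)$, which obviously contains $E_2$. This gives a well-defined map $\varphi: \mathcal{C}_1 \to \mathcal{C}_2$, and symmetrically a map $\psi: \mathcal{C}_2 \to \mathcal{C}_1$ defined by using $y_1$ in place of $y_2$.

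The key step, which I expect to be the main (but still routine) point, is to verify that $\psi$ and $\varphi$ are mutual inverses. Starting from $C_1 \in \mathcal{C}_1$ and setting $C_2 := \varphi(C_1)$, I would argue as follows: every vertex of $A_1(C_1)$ lies in $C_2$ (by construction), and every such vertex is adjacent to $y_1$ (since it lies in $C_1$). Combined with $x \sim y_1$ and $x \in C_2$, this exhibits $e$ distinct neighbours of $y_1$ inside $C_2$. Since $y_1 \notin C_2$ (as $y_1 \not\sim y_2 \in C_2$), Proposition \ref{prop2} or \ref{prop3} forces $|y_1^\perp \cap C_2| = e$, hence
\[ y_1^\perp \cap (C_2 \setminus \{x\}) \;=\; A_1(C_1). \]
Therefore the $(e+1)$-clique used to define $\psi(C_2)$ equals $\{x, y_1\} \cup A_1(C_1)$, which is contained in $C_1$. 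By the uniqueness in Lemma \ref{lem5}, this forces $\psi(C_2) = C_1$. The other composition $\varphi \circ \psi = \mathrm{id}$ follows by symmetry.

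Concluding, $\varphi$ is a bijection between $\mathcal{C}_1$ and $\mathcal{C}_2$, so $|\mathcal{C}_1| = |\mathcal{C}_2|$, which is exactly the statement of the lemma. The only place where the assumption $|E_1 \cap E_2| = 1$ and $E_1 \cup E_2$ not a clique is used is to guarantee $y_1 \not\sim y_2$, which is what makes $y_2$ lie outside $C_1$ and allows the application of the regular clique property.
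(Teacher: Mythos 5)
Your proof is correct and follows essentially the same route as the paper: you build the partner clique through $E_2$ as the unique $(s+1)$-clique (Lemma \ref{lem5}) containing the $(e+1)$-clique $\{y_2\} \cup (y_2^\perp \cap C_1)$, which is exactly the paper's construction. The only cosmetic difference is that the paper packages the conclusion as a double count of pairs $(C_1,C_2)$ with $|C_1 \cap C_2| = e$, whereas you spell out the same pairing as an explicit bijection with its inverse verified.
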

\begin{proof}
Put $E_1 = xy_1$, $E_2 = xy_2$ and let $i \in \{ 1,2 \}$. For every $(s+1)$-clique $C_i$ through $E_i$, there exists a unique $(s+1)$-clique $C_{3-i}$ through $E_{3-i}$ intersecting $C_i$ in exactly $e$ vertices. Indeed, this clique necessarily coincides with the unique $(s+1)$-clique containing the $(e+1)$-clique $\{ y_{3-i} \} \cup (y_{3-i}^\perp \cap C_i)$. The lemma then follows by counting in two ways the pairs $(C_1,C_2)$, where  each $C_i$, $i \in \{ 1,2 \}$, is an $(s+1)$-clique through $E_i$ and $|C_1 \cap C_2| = e$.
\end{proof}

\begin{lemma} \label{lem9}
If $E_1$ and $E_2$ are two edges for which $|E_1 \cap E_2|=1$ and $E_1 \cup E_2$ is a clique, then they are contained in the same number of $(s+1)$-cliques.
\end{lemma}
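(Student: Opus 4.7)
\medskip\noindent\textbf{Proof plan.}
Write $E_1 = xy_1$ and $E_2 = xy_2$ with $y_1 \sim y_2$, and for any edge $E$ let $N(E)$ denote the number of $(s+1)$-cliques through $E$. The plan is to produce a short chain of edges $E_1, F_1, \ldots, F_t, E_2$ in which every consecutive pair satisfies the hypothesis of Lemma~\ref{lem8}, so that $N(E_1) = N(E_2)$ follows by iterating that lemma; the natural candidates are intermediate edges $xz$ through the shared vertex $x$.

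\medskip\noindent
Set $A_i := N(x) \setminus N[y_i]$ for $i = 1, 2$. Since $\Gamma$ is connected (Lemma~\ref{lem7}) and non-complete, we must have $\lambda \leq k - 2$: indeed $\lambda = k - 1$ would force every two adjacent vertices to share the same closed neighbourhood, which in a connected graph would make $\Gamma$ a single clique. Hence $|A_i| = k - \lambda - 1 \geq 1$, and an inclusion-exclusion gives $|A_1 \cap A_2| = k - 2\lambda + \mu$, where $\mu := |N(x) \cap N(y_1) \cap N(y_2)|$. If $A_1 \cap A_2 \neq \emptyset$, pick $z$ in this intersection; then $F := xz$ shares only $x$ with each of $E_1, E_2$, and neither of $\{x, y_1, z\}, \{x, y_2, z\}$ is a clique (since $z \not\sim y_1, y_2$), so two applications of Lemma~\ref{lem8} yield $N(E_1) = N(F) = N(E_2)$. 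More generally, whenever there exist $u \in A_1$ and $v \in A_2$ with $u \neq v$ and $u \not\sim v$, the chain $E_1, xu, xv, E_2$ still consists of edges each consecutive pair of which shares only $x$ and has a non-clique union, so three applications of Lemma~\ref{lem8} deliver the same conclusion.

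\medskip\noindent
The main obstacle is the remaining degenerate configuration in which $A_1 \cap A_2 = \emptyset$ and every pair in $A_1 \times A_2$ is adjacent in $\Gamma$, forcing $k = 2\lambda - \mu$ and a complete bipartite graph between $A_1$ and $A_2$ inside $\Gamma[N(x)]$. To handle this case I would first try the symmetric construction with $y_1$ or $y_2$ playing the role of the pivot: only if the same degeneracy persists simultaneously at all three vertices of $T := \{x, y_1, y_2\}$ is one really blocked. In that extreme case I would invoke the $(s+1)$-clique $C^*$ through $T$ furnished by Lemma~\ref{lem6}: its $s - 2$ vertices outside $T$ all lie in $N(x) \cap N(y_1) \cap N(y_2)$, and the uniqueness of the $(s+1)$-clique through each $(e+1)$-clique (Lemma~\ref{lem5}) would force these common neighbours to interact with $A_1$ and $A_2$ in a way that either produces a usable intermediate edge after all or contradicts the rigid bipartite structure — this is the step I expect to be the most delicate.
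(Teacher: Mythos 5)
Your first two cases are fine: when some neighbour $z$ of $x$ is non-adjacent to both $y_1$ and $y_2$, or when there are $u\in A_1$, $v\in A_2$ with $u\neq v$ and $u\not\sim v$, the chains $E_1,xz,E_2$ and $E_1,xu,xv,E_2$ do satisfy the hypotheses of Lemma \ref{lem8} at each step, and this is a genuinely different (and shorter) route than the paper's, which never iterates Lemma \ref{lem8} but instead compares the numbers of $(s+1)$-cliques through the triangles $\{x,y_1,z_1\}$ and $\{x,y_1,y_2\}$ via the same bijection trick applied to triples, and then counts the cliques through $E_1$ but not $E_2$ as $(\lambda-\eta-1)\Omega/(s+1-e)$, symmetrically for $E_2$.

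The problem is your remaining ``degenerate configuration'', which is not an exotic corner case but occurs in actual Neumaier graphs covered by the standing hypothesis, and there your strategy cannot be completed at all. Take $\Gamma$ complete multipartite with $s+1\geq 3$ parts of size $m\geq 2$ (e.g.\ the octahedron $K_{2,2,2}$); here $e=s$, so every $(e+1)$-clique lies in an $(s+1)$-clique trivially, and the hypotheses of the section hold. For a triangle $x,y_1,y_2$ in three distinct parts, $A_1$ is the part of $y_1$ minus $y_1$ and $A_2$ is the part of $y_2$ minus $y_2$: they are disjoint and completely joined, and by symmetry the same degeneracy persists when you pivot at $y_1$ or $y_2$. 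Worse, no chain of Lemma \ref{lem8} moves of any length, through any pivots, can connect $E_1$ to $E_2$ in such a graph: two edges sharing a vertex have non-clique union exactly when their free endpoints lie in the same part, so every Lemma \ref{lem8} move preserves the (unordered) pair of parts met by the edge, while $E_1$ and $E_2$ meet different pairs of parts. Hence the conclusion of Lemma \ref{lem9} cannot be reached by iterating Lemma \ref{lem8}, and your fallback sketch also cannot work as stated: no contradiction is available (the configuration exists), and the clique $C^\ast$ through $T$ supplied by Lemma \ref{lem6} gives nothing new when $s+1=3$ (then $C^\ast=T$), nor a usable intermediate edge in general. So the final step is a genuine gap requiring a different idea, such as the paper's counting of $(s+1)$-cliques through triangles, which handles the multipartite-like configurations uniformly.
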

\begin{proof}
For any two adjacent vertices $u$ and $v$, let $N_{uv}$ denote the number of $s+1$-cliques containing $\{ u,v \}$. Put $E_1 = xy_1$ and $E_2 = xy_2$. Let $\eta$ denote the number of vertices adjacent to $x$, $y_1$ and $y_2$. Then for every $i \in \{ 1,2 \}$, the set $A_i$ of vertices distinct from $y_{3-i}$ adjacent to $x$ and $y_i$ but not to $y_{3-i}$ has order $\lambda - \eta - 1$.

Every $(s+1)$-clique through $E_i$ not containing $y_{3-i}$ contains $s+1-e > 0$ elements of $A_i$, namely the $s+1-e$ vertices of $S_i \setminus y_{3-i}^\perp$. So, if $\lambda-\eta-1=0$, then both $N_{xy_1}$ and $N_{xy_2}$ are equal to the number of $(s+1)$-cliques through $\{ x,y_1,y_2 \}$. In the sequel, we may therefore assume that $|A_1|=|A_2|=\lambda-\eta-1>0$.

For every $z_1 \in A_1$, we show that the number of $(s+1)$-cliques containing $\{ x,y_1,z_1 \}$ equals the number of $(s+1)$-cliques containing $\{ x,y_1,y_2 \}$. Put $T_1 = \{ x,y_1,z_1 \}$, $T_2 = \{ x,y_1,y_2 \}$, $u_1 = z_1$ and $u_2=y_2$. Note that $u_1 \not\sim u_2$. For every $i \in \{ 1,2 \}$ and every clique $C_i$ of order $s+1$ through $T_i$, there then exists a unique clique $C_{3-i}$ of order $s+1$ through $T_{3-i}$ intersecting $C_i$ in a set of order $e$. This clique $C_{3-i}$ is precisely the unique $(s+1)$-clique containing the $(e+1)$-clique $u_{3-i} \cup (u_{3-i}^\perp \cap C_i)$. The claim that the numbers of $(s+1)$-cliques coincide then follows from counting in two ways all pairs $(C_1,C_2)$ satisfying $T_1 \subseteq C_1$, $T_2 \subseteq C_2$ and $|C_1 \cap C_2| = e$.

In a completely similar way, one proves that if $z_2 \in A_2$, then the number of $(s+1)$-cliques containing $\{ x,y_2,z_2 \}$ equals the number of $(s+1)$-cliques containing $\{ x,y_1,y_2 \}$. If $z_1 \in A_1$ and $z_2 \in A_2$, we thus see that the number of $(s+1)$-cliques containing $\{ x,y_1,z_1 \}$ coincides with the number of $(s+1)$-cliques containing $\{ x,y_2,z_2 \}$. We call this number $\Omega$.

We count the number of $(s+1)$-cliques containing $E_1$, but not $E_2$. If $C$ is such a clique, then $C$ contains $s+1-e > 0$ vertices of $A_1$. As every vertex of $A_1$ is contained in $\Omega$ $(s+1)$-cliques together with $E_1$, and no such $(s+1)$-clique contains $E_2$, we see that the number of $(s+1)$-cliques containing $E_1$ but not $E_2$ is equal to
\[  \frac{|A_1| \cdot \Omega}{s+1-e} = \frac{(\lambda - \eta - 1) \Omega}{s+1-e}.  \]
In a similar way one proves that the total number of $(s+1)$-cliques containing $E_2$ but not $E_1$ equals $\frac{(\lambda - \eta-1) \Omega}{s+1-e}$. To find the total number of $(s+1)$-cliques through $E_i$, $i \in \{ 1,2 \}$, we should still add to this number the total number of $(s+1)$-cliques containing $\{ x,y_1,y_2 \}$. But this extra contribution is constant for both $E_1$ and $E_2$.
\end{proof}

\medskip \noindent By Lemmas \ref{lem8}, \ref{lem9} and the fact that $\Gamma$ is connected, we have:

\begin{corollary} \label{co10}
The number of $(s+1)$-cliques through an edge is constant.
\end{corollary}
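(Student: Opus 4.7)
The plan is to combine Lemmas~\ref{lem8} and~\ref{lem9} to conclude that any two edges of $\Gamma$ sharing a common vertex lie in the same number of $(s+1)$-cliques, and then propagate this equality to arbitrary pairs of edges via a path in $\Gamma$.

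First I would note that if $E_1$ and $E_2$ are edges with $|E_1 \cap E_2| = 1$, then the set $E_1 \cup E_2$ consists of three distinct vertices, two of which are equal to $E_1 \cap E_2$'s neighbours along $E_1$ and $E_2$. Those two vertices are either adjacent or non-adjacent, so we are in exactly one of the two cases covered by Lemmas~\ref{lem8} and~\ref{lem9}. In either case, $E_1$ and $E_2$ are contained in the same number of $(s+1)$-cliques. Denote by $N(E)$ the number of $(s+1)$-cliques containing an edge $E$; we have just shown that $N$ is constant on edges sharing a vertex.

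Next I would use connectedness of $\Gamma$ to lift this to arbitrary edges $E$ and $E'$. Connectedness follows from Lemma~\ref{lem7}, since diameter $2$ implies that $\Gamma$ is connected. Pick endpoints $x \in E$ and $x' \in E'$ and a path $x = x_0 \sim x_1 \sim \cdots \sim x_m = x'$ in $\Gamma$. Then the sequence of edges $E,\, x_0x_1,\, x_1x_2,\, \ldots,\, x_{m-1}x_m,\, E'$ has the property that any two consecutive terms either coincide or share exactly one vertex (possibly after a trivial adjustment if $E$ already equals $x_0x_1$ or $E'$ equals $x_{m-1}x_m$). Applying the previous paragraph step by step along this sequence yields $N(E) = N(E')$.

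There is essentially no obstacle here: all the combinatorial work has been absorbed into Lemmas~\ref{lem8} and~\ref{lem9}, and the present statement is a routine graph-connectivity consequence. The only mild subtlety is handling the degenerate cases in which two consecutive edges in the chain happen to coincide, but these are trivial.
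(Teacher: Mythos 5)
Your proposal is correct and follows exactly the paper's route: the paper derives Corollary~\ref{co10} directly from Lemmas~\ref{lem8} and \ref{lem9} (which together cover both the clique and non-clique cases for two edges sharing a vertex) plus connectedness of $\Gamma$, which is what you spell out with the edge-path propagation argument.
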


\medskip \noindent As $\Gamma$ is regular of valency $k \geq 1$, this immediately implies the following via a double counting.

\begin{corollary} \label{co11}
The number of $(s+1)$-cliques through a vertex is constant.
\end{corollary}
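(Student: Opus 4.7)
The plan is a straightforward double counting. Let $N$ denote the common number of $(s+1)$-cliques through an edge of $\Gamma$, as guaranteed by Corollary \ref{co10}. Fix an arbitrary vertex $x$, and let $M_x$ denote the number of $(s+1)$-cliques containing $x$. I would count in two different ways the set $\mathcal{P}_x$ of pairs $(C,E)$, where $C$ is an $(s+1)$-clique through $x$ and $E$ is an edge of $C$ incident with $x$.

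On the one hand, each $(s+1)$-clique $C$ through $x$ has exactly $s$ other vertices, each giving rise to a unique edge of $C$ through $x$; hence $C$ contributes exactly $s$ pairs to $\mathcal{P}_x$, giving $|\mathcal{P}_x| = s \cdot M_x$. On the other hand, each of the $k$ edges through $x$ lies in precisely $N$ cliques of order $s+1$ by Corollary \ref{co10}, so $|\mathcal{P}_x| = k \cdot N$. Equating the two expressions yields $M_x = \frac{kN}{s}$, a quantity independent of $x$.

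It remains only to note that the division by $s$ makes sense. Since $\Gamma$ is a Neumaier graph it contains a regular clique, which by Propositions \ref{prop2} and \ref{prop3} has order $s+1 \geq 2$, so $s \geq 1$. There is no real obstacle in this step; the entire content lies in Corollary \ref{co10}, and the present statement is essentially a cosmetic consequence obtained by passing from edge-counts to vertex-counts via regularity.
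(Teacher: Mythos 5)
Your proof is correct and is essentially the same argument as the paper's, which derives Corollary \ref{co11} from Corollary \ref{co10} by exactly this double counting using the regularity of $\Gamma$ (the paper just leaves the count $s\cdot M_x = k\cdot N$ implicit). Your extra remark that $s \geq 1$ is a harmless bit of added care.
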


\medskip \noindent By Lemma \ref{lem7}, Corollaries \ref{co10}, \ref{co11} and Proposition A2 of \cite{bdb-Suz}, we obtain the desired result which proves Theorem \ref{mainresultNeumaier1file}:

\begin{corollary} \label{co12}
The graph $\Gamma$ is strongly regular.
\end{corollary}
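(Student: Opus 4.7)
The plan is to verify directly the defining property of a strongly regular graph: since $\Gamma$ is already edge-regular, it suffices to prove that the number of common neighbours of any two distinct non-adjacent vertices is a constant $\mu$ independent of the pair. By Lemma \ref{lem7}, every such pair lies at distance exactly $2$, so the question is meaningful and non-vacuous.

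The main idea is a double count using the regular clique structure. Write $n$ for the constant number of $(s+1)$-cliques through any vertex, guaranteed by Corollary \ref{co11}, and $m$ for the constant number of $(s+1)$-cliques through any edge, guaranteed by Corollary \ref{co10}; note that $m \geq 1$ because each edge extends to an $(s+1)$-clique by Lemma \ref{lem6}. Fix non-adjacent vertices $x$ and $y$ and let $\mu(x,y)$ denote their number of common neighbours. Count the set
\[
\mathcal{P}(x,y) := \{(C,z) : C \text{ is an $(s+1)$-clique through } x,\ z \in C,\ z \sim y\}
\]
in two ways. Fixing $C$ first: since $x \not\sim y$ forces $y \notin C$, Propositions \ref{prop2} and \ref{prop3} give exactly $e$ vertices of $C$ adjacent to $y$, so $|\mathcal{P}(x,y)| = n \cdot e$. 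Fixing $z$ first: any such $z$ is a common neighbour of $x$ and $y$, and for each such $z$ the edge $xz$ lies in $m$ cliques of order $s+1$, so $|\mathcal{P}(x,y)| = \mu(x,y) \cdot m$.

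Combining the two counts yields $\mu(x,y) = ne/m$, which is independent of the pair $(x,y)$. Together with edge-regularity this shows that $\Gamma$ is strongly regular, proving Theorem \ref{mainresultNeumaier1file}. The argument encapsulated in Proposition A2 of \cite{bdb-Suz} presumably formalises exactly this kind of ``constant number of cliques through a vertex/edge plus diameter two'' inference; the only potential obstacle is ensuring $m > 0$ and that $y$ lies outside every $(s+1)$-clique through $x$, both of which are immediate from the earlier lemmas.
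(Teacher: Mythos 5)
Your argument is correct, and it uses exactly the ingredients the paper assembles (Corollaries \ref{co10} and \ref{co11}, the fact that every $(s+1)$-clique is regular via Propositions \ref{prop2} and \ref{prop3}, and Lemma \ref{lem7}); the difference is that the paper then simply invokes Proposition A2 of \cite{bdb-Suz} as a black box, whereas you inline that final step with an explicit double count of the pairs $(C,z)$. Your count is sound: for non-adjacent $x,y$ one has $y\notin C$ for every $(s+1)$-clique $C$ through $x$, so fixing $C$ gives $n\cdot e$; and any $z$ occurring in a pair satisfies $z\neq x$ (else $x\sim y$), hence $z$ is a common neighbour of $x$ and $y$ and the edge $xz$ lies in exactly $m\geq 1$ cliques of order $s+1$, giving $\mu(x,y)\cdot m$. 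Thus $\mu(x,y)=ne/m$ is constant, and together with edge-regularity and regularity this yields strong regularity. Two small remarks: your appeal to Lemma \ref{lem7} is not even needed, since $n\geq 1$ and $e\geq 1$ make $\mu(x,y)=ne/m>0$, which re-derives diameter $2$ as a byproduct; and your proof has the added value of producing the parameter $\mu=ne/m$ explicitly and of making the paper's conclusion self-contained, at the cost of not recovering the more general statement about intriguing sets that the cited Proposition A2 provides.
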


\subsection{Characterizations of strongly regular graphs using eigenvalues}

In this section we present some new eigenvalue conditions under which a connected and regular graph is strongly regular. We thus relax the conditions of requiring edge-regularity and a regular clique (i.e., a Neumaier graph) from Section \ref{Sec:Neumaier1file}. 

First we need to introduce some definitions. Let $\Gamma = (V,E)$ be a graph on $v \geq 3$ vertices that is not a complete multipartite graph, in particular, $\Gamma$ is not a complete graph nor a graph without edges.

We denote by $\bar{k} = \frac{2 \cdot |E|}{v} > 0$ the average degree of the vertices of $\Gamma$. We put $\bar{\lambda} := \frac{6N}{v\bar{k}}$, where $N$ is the total number of triangles of $\Gamma$. If we define $\lambda_{xy} := |\Gamma_1(x) \cap \Gamma_1(y)|$ for every edge $xy$ of $\Gamma$, then as $\sum_{xy \in E} \lambda_{xy} = 3N$ and $|E| = \frac{v\bar{k}}{2}$, we see that $\bar{\lambda}$ is the average of the $\lambda_{xy}$'s with $xy \in E$.

In Lemmas \ref{lem3} and \ref{lem4bis} below, we show that $v > \bar{k}+1$ and $v + \bar{\lambda} - 2\bar{k} > 0$. This implies that the quadratic polynomial
\[  (v+\bar{\lambda}-2\bar{k}) X^2 + (\bar{k}^2-\bar{k}+\bar{\lambda}-\bar{\lambda} v) X - \bar{k}(v-\bar{k}-1) \in \R[X] \]
has two real roots, a negative one and a positive one which we denote by $\bar{s}$. We thus have
\[ \bar{s} = \frac{-(\bar{k}^2-\bar{k}+\bar{\lambda}-\bar{\lambda} v) + \sqrt{(\bar{k}^2-\bar{k}+\bar{\lambda}-\bar{\lambda} v)^2+4 \bar{k}(v-\bar{k}-1)(v+\bar{\lambda}-2\bar{k})}}{2(v+\bar{\lambda}-2\bar{k})} > 0. \]   
We also define 
\[  \bar{\mu} := \frac{\bar{k}(\bar{k}-\bar{\lambda}-1)}{v-\bar{k}-1},\qquad \qquad \theta_m := -\frac{\bar k}{\bar s}, \qquad \qquad \theta_M :=\frac{\bar k-\bar \mu}{\bar k}\bar s. \]
If $\Gamma$ is a strongly regular graph with parameters $(v,k,\lambda,\mu)$, then $(\bar{k},\bar{\lambda},\bar{\mu})=(k,\lambda,\mu)$ and we will see that $k$, $\theta_M$ and $\theta_m$ are the eigenvalues of $\Gamma$ (see Lemma \ref{lem7bis} and its ensuing remark). If $\Gamma$ is a general graph, then we can interpret $\theta_M$ and $\theta_m$ by their algebraic definitions, depending on $\bar k$, $\bar s$ and $\bar \mu$. 

\medskip \noindent Our main result in this section is as follows.

\begin{theorem}\label{Neumaier4mainthm}
Suppose $\Gamma$ is a connected regular graph that is not a complete multipartite graph. If $\theta_{Max2}$ is the second largest eigenvalue and $\theta_{min}$ the smallest eigenvalue of $\Gamma$, then
\begin{description}
\item [$(a)$]  $\theta_{min} \leq \theta_m$ with equality if and only if $\Gamma$ is strongly regular, and
\item [$(b)$] $\theta_{Max2} \geq \theta_M$, and equality holds if and only if $\Gamma$ is strongly regular.
\end{description}
\end{theorem}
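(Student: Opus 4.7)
The plan is to recognise $\theta_m$ and $\theta_M$ as the two roots of the quadratic $f(X):=X^2-(\bar{\lambda}-\bar{\mu})X+(\bar{\mu}-\bar{k})$, to derive from two trace identities the pair of moment relations
\[ \sum_{i\geq 2}(\theta_i-\theta_M)^2(\theta_i-\theta_m)=0 \quad\text{and}\quad \sum_{i\geq 2}(\theta_i-\theta_m)^2(\theta_i-\theta_M)=0, \]
and then to extract each of (a) and (b) from a sign analysis of the three factors in each summand.

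The first step is purely algebraic: substituting $\bar{s}=-\bar{k}/\theta_m$ into the quadratic defining $\bar{s}$ and simplifying using the explicit formula for $\bar{\mu}$ shows that $\theta_m$ is a root of $f$, so by Vieta's formulas $\theta_M$ is the other root. In particular $\theta_m+\theta_M=\bar{\lambda}-\bar{\mu}$ and $\theta_m\theta_M=\bar{\mu}-\bar{k}$. Letting $A$ denote the adjacency matrix of $\Gamma$ and $\bar{k}=\theta_1>\theta_2\geq\cdots\geq\theta_v$ its eigenvalues, the standard identities $\mathrm{tr}(A)=0$, $\mathrm{tr}(A^2)=v\bar{k}$ and $\mathrm{tr}(A^3)=v\bar{k}\bar{\lambda}$ (coming from the edge and triangle counts) yield
\[ \mathrm{tr}(f(A))=v\bar{k}+(\bar{\mu}-\bar{k})v=v\bar{\mu}=f(\bar{k}), \qquad \mathrm{tr}(Af(A))=v\bar{k}\bar{\lambda}-(\bar{\lambda}-\bar{\mu})v\bar{k}=v\bar{k}\bar{\mu}=\bar{k}f(\bar{k}), \]
so that $\sum_{i\geq 2}f(\theta_i)=0$ and $\sum_{i\geq 2}\theta_if(\theta_i)=0$. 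Subtracting $\theta_M$ (respectively $\theta_m$) times the first equation from the second then produces the two displayed identities.

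For part (a), suppose $\theta_{min}>\theta_m$. Each factor $\theta_i-\theta_m$ with $i\geq 2$ is then strictly positive, so every summand in the first identity is non-negative; the vanishing of the sum forces $\theta_i=\theta_M$ for every $i\geq 2$. Thus $\Gamma$ is a connected regular graph with only two distinct eigenvalues and hence is complete, contradicting the assumption that $\Gamma$ is not a complete multipartite graph. In the equality case $\theta_{min}=\theta_m$, the factors $\theta_i-\theta_m$ are merely non-negative, and the first identity still forces each summand to vanish, so $\theta_i\in\{\theta_m,\theta_M\}$ for every $i\geq 2$. Then $\Gamma$ is a non-complete connected regular graph with at most three distinct eigenvalues and is therefore strongly regular; the converse (SRG implies $\theta_{min}=\theta_m$) follows from the standard identification of the two non-principal SRG eigenvalues as $\theta_M$ and $\theta_m$.

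Part (b) is entirely symmetric, but uses the second identity: assuming $\theta_{Max2}<\theta_M$ makes every factor $\theta_i-\theta_M$ strictly negative for $i\geq 2$, so the non-positive summands $(\theta_i-\theta_m)^2(\theta_i-\theta_M)$ must all vanish, forcing $\theta_i=\theta_m$ for all $i\geq 2$ and again making $\Gamma$ complete. The equality case $\theta_{Max2}=\theta_M$ yields $\theta_i\in\{\theta_m,\theta_M\}$ in the same way. The only step in which I expect genuine calculation is the opening algebraic verification that the quadratic defining $\bar{s}$ coincides, under $X=-\bar{k}/\theta$ and up to a nonzero scalar, with $f$; the remainder of the argument is a compact play on signs.
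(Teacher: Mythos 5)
Correct, and essentially the paper's own approach: you identify $\theta_m,\theta_M$ as the roots of $X^2+(\bar{\mu}-\bar{\lambda})X+(\bar{\mu}-\bar{k})$ (Lemma \ref{lem7bis}), invoke the moment identities of Lemma \ref{lem2}, and perform the same sign analysis of $\sum(\omega-\theta_M)^2(\omega-\theta_m)$ and $\sum(\omega-\theta_m)^2(\omega-\theta_M)$ over the non-principal eigenvalues that the paper carries out in Lemmas \ref{lem10} and \ref{lem12}. The only (harmless) difference is that the paper proves part $(a)$ via a more general statement for possibly non-regular graphs with $\bar{k}\geq\bar{\lambda}+1$, which forces the extra comparison with the largest eigenvalue, whereas you specialise immediately to the regular case, where both sums vanish exactly.
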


\bigskip \noindent In what follows we prove Theorem \ref{Neumaier4mainthm} as a special case of a more general result on arbitrary (not necessarily regular) graphs. First we recall some known facts about the spectrum of a graph.

\begin{lemma} \label{lem1bis}
\begin{itemize}
\item[$(a)$] If $\Gamma'$ is a graph having precisely one eigenvalue, then this eigenvalue is $0$ and $\Gamma'$ has no edges.
\item[$(b)$] If $\Gamma'$ is a graph having precisely two eigenvalues, then the connected components of $\Gamma'$ are complete graphs of the same order $t \geq 2$.
\item[$(c)$] Every connected regular graph having precisely three eigenvalues is strongly regular.
\end{itemize}
\end{lemma}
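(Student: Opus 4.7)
The plan is to extract each part from the minimal polynomial of the adjacency matrix $A$ of $\Gamma'$, using that $A$ is real symmetric and therefore diagonalizable, with minimal polynomial equal to the product $\prod_i (x-\theta_i)$ over the distinct eigenvalues $\theta_i$. For part $(a)$, a single eigenvalue $\theta$ forces $A = \theta I$; the zero diagonal of $A$ then gives $\theta = 0$, so $\Gamma'$ has no edges.

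For part $(b)$, the two eigenvalues $\theta_1 > \theta_2$ yield the identity $A^2 = (\theta_1+\theta_2)\, A - \theta_1\theta_2\, I$. Reading this entry-wise, the diagonal gives $\deg(u) = -\theta_1\theta_2$ for every vertex $u$, so $\Gamma'$ is regular; for $u \neq v$ the $(u,v)$-entry of $A^2$ counts common neighbours of $u$ and $v$, and this equals $\theta_1+\theta_2$ when $u \sim v$ and $0$ when $u \not\sim v$. The second condition forces every connected component to be a clique (any non-edge inside a component would be bridged by a common neighbour lying on some length-$2$ path), and regularity pins all components to a common order $t$, which must satisfy $t \geq 2$ since otherwise $A$ would have at most one eigenvalue.

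For part $(c)$, I would combine Perron--Frobenius with the same minimal-polynomial trick. Connectedness and regularity of valency $k$ ensure that $k$ is a simple eigenvalue, with eigenvector the all-ones vector $\mathbf{1}$. Writing the three eigenvalues as $k > \theta_1 > \theta_2$, the matrix $M := (A-\theta_1 I)(A-\theta_2 I)$ kills the eigenspaces of $\theta_1$ and $\theta_2$, so its image lies in the one-dimensional $k$-eigenspace $\operatorname{span}(\mathbf{1})$; symmetry of $M$ then upgrades this to $M = \alpha J$ for some $\alpha \in \R$, where $J$ is the all-ones matrix. Expanding $M = A^2 - (\theta_1+\theta_2) A + \theta_1\theta_2\, I$ and comparing the three entry types (diagonal, adjacent off-diagonal, non-adjacent off-diagonal) reads off the strongly regular parameters $\mu = \alpha$ and $\lambda = \alpha + \theta_1 + \theta_2$. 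The only genuinely delicate step in the whole lemma is this rank-$1$ / Perron--Frobenius observation in $(c)$; everything else is entry-wise bookkeeping which I would not grind through here.
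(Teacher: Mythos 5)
Your argument is correct: the minimal-polynomial identities in $(a)$ and $(b)$, and the Perron--Frobenius/rank-one step giving $(A-\theta_1 I)(A-\theta_2 I)=\alpha J$ in $(c)$, are exactly the standard proofs of these facts. The paper itself states Lemma \ref{lem1bis} without proof, as a collection of known results about graph spectra, so there is nothing to contrast with; your write-up simply supplies the standard justification the paper takes for granted (the only nitpick being that in $(b)$ you should invoke a pair of vertices at distance exactly $2$ in a non-complete component to contradict the vanishing of the off-diagonal entries of $A^2$, which is an immediate repair).
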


\medskip \noindent We denote by $\Omega$ the multiset of order $v$ whose elements are the eigenvalues of $\Gamma$ (taking into account their multiplicities). For every $\theta \in \Omega$, we denote by $\Omega_\theta$ the multiset of order $v-1$ obtained from $\Omega$ by removing one copy of $\theta$.

\begin{lemma} \label{lem2}
We have
\[  \sum_{\theta \in \Omega} \theta = 0,\qquad \sum_{\theta \in \Omega} \theta^2 = v\bar{k},\qquad \sum_{\theta \in \Omega} \theta^3 = v\bar{k}\bar{\lambda}.  \]
\end{lemma}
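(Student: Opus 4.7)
The plan is to identify each power sum $\sum_{\theta \in \Omega} \theta^i$ with the trace of the corresponding power of the adjacency matrix, and then interpret those traces combinatorially. Let $A$ be the (symmetric, $0/1$) adjacency matrix of $\Gamma$. Since $A$ is real symmetric, it is orthogonally diagonalizable, and its eigenvalues (with multiplicity) are exactly the elements of $\Omega$. Consequently, for every positive integer $i$,
\[
\mathrm{tr}(A^i) \;=\; \sum_{\theta \in \Omega} \theta^i.
\]
The plan is now to compute $\mathrm{tr}(A)$, $\mathrm{tr}(A^2)$ and $\mathrm{tr}(A^3)$ directly from the combinatorics of $\Gamma$, using the standard fact that the $(x,y)$-entry of $A^i$ counts walks of length $i$ from $x$ to $y$.

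First, $\mathrm{tr}(A) = 0$, because $\Gamma$ has no loops and hence every diagonal entry of $A$ is $0$. This gives the first identity $\sum_{\theta \in \Omega}\theta = 0$. Second, the $(x,x)$-entry of $A^2$ equals the number of walks of length $2$ from $x$ to itself, which is $\deg(x)$. Hence
\[
\mathrm{tr}(A^2) \;=\; \sum_{x \in V} \deg(x) \;=\; 2|E| \;=\; v\bar{k},
\]
the last equality being the definition $\bar k = 2|E|/v$. This proves the second identity.

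For the third identity, the $(x,x)$-entry of $A^3$ counts closed walks $x \to y \to z \to x$ of length $3$; such a walk forces $\{x,y,z\}$ to be a triangle of $\Gamma$, and each triangle $\{x,y,z\}$ contributes exactly $2$ closed walks based at $x$ (clockwise and counter-clockwise). Summing over the $v$ vertices, each triangle is counted $3 \cdot 2 = 6$ times in total, so $\mathrm{tr}(A^3) = 6N$. Recalling the defining relation $\bar{\lambda} = \frac{6N}{v\bar k}$, we obtain
\[
\sum_{\theta \in \Omega} \theta^3 \;=\; \mathrm{tr}(A^3) \;=\; 6N \;=\; v\bar{k}\bar{\lambda},
\]
which is the third identity. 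There is no real obstacle here: the whole lemma is a routine application of the trace formula $\mathrm{tr}(A^i) = \sum \theta^i$ together with the walk interpretation of the entries of $A^i$, and the only thing one has to be slightly careful about is the combinatorial factor $6$ relating closed walks of length $3$ to triangles, which matches exactly the factor $6$ appearing in the definition of $\bar{\lambda}$.
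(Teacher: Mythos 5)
Your proof is correct: all three trace computations are right, and the factor $6$ relating closed $3$-walks to triangles indeed matches the normalization in $\bar\lambda = 6N/(v\bar k)$, so $\mathrm{tr}(A^3) = 6N = v\bar k\bar\lambda$. Your route differs from the paper's, though. You identify the power sums $\sum_{\theta\in\Omega}\theta^i$ with $\mathrm{tr}(A^i)$ and evaluate the traces combinatorially via walk counts (loops, degrees, triangles). The paper instead works with the coefficients $c_1, c_2, c_3$ of the characteristic polynomial, using the standard facts $c_1 = 0$, $c_2 = -|E| = -\tfrac{v\bar k}{2}$, $c_3 = -2N = -\tfrac{v\bar k\bar\lambda}{3}$, and then converts the elementary symmetric functions $S_1, S_2, S_3$ of the eigenvalues into the power sums via Newton-type identities, e.g. $\sum\theta^2 = S_1^2 - 2S_2$ and $\sum\theta^3 = S_1^3 - 3S_1S_2 + 3S_3$. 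The two arguments encode the same combinatorial information (edge and triangle counts) but package it differently: your trace argument is more direct and self-contained, avoiding the symmetric-function manipulation, while the paper's version leans on the classical description of the low-order characteristic polynomial coefficients of a graph and keeps the computation purely algebraic once those are quoted. Either is a complete proof of the lemma.
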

\begin{proof}
Put $S_1 :=  \sum_{\theta \in \Omega} \theta$, $S_2 := \sum_2 \theta \theta'$ and $S_3 := \sum_3 \theta \theta' \theta''$, with $\Sigma_2$ the summation over all $\{ \theta,\theta' \} \in {\Omega \choose 2}$ and $\Sigma_3$ the summation over all $\{ \theta,\theta',\theta'' \} \in {\Omega \choose 3 }$. If $x^n + c_1 x^{n-1} + c_2  x^{n-2} + \cdots + c_n = \prod_{\theta \in \Omega} (x-\theta)$ is the characteristic polynomial of $\Gamma$, then $S_1 = -c_1 =0$, $S_2=c_2$ and $S_3=-c_3$. As the number of edges is $\frac{v\bar{k}}{2}$ and the number of triangles is $\frac{v\bar{k}\bar{\lambda}}{6}$, we have $c_2=-\frac{v\bar{k}}{2}$ and $c_3=-\frac{v\bar{k}\bar{\lambda}}{3}$. So, $S_2 = -\frac{v\bar{k}}{2}$, $S_3 = \frac{v\bar{k}\bar{\lambda}}{3}$ and $\sum_{\theta \in \Omega} \theta^2 = S_1^2 -2S_2=v\bar{k}$ and $\sum_{\theta \in \Omega} \theta^3 =S_1^3 -3S_1S_2+3S_3 = v\bar{k}\bar{\lambda}$.
\end{proof}

\begin{lemma} \label{lem3}
We have $v > \bar{k}+1$.
\end{lemma}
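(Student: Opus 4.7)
The plan is to exploit the fact that $\Gamma$ is not a complete multipartite graph. By the paragraph preceding the lemma, this hypothesis already implies, in particular, that $\Gamma$ is not the complete graph $K_v$ (since $K_v$ is the complete multipartite graph with $v$ singleton parts). From this I will deduce directly that the edge set $E$ satisfies a strict inequality against $\binom{v}{2}$, which is all that is needed.

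More precisely, since $\Gamma$ is not complete, there exists at least one pair of distinct vertices that is not joined by an edge, so $|E| \leq \binom{v}{2} - 1$. Substituting into the definition of the average degree, I then estimate
\[ \bar{k} \;=\; \frac{2|E|}{v} \;\leq\; \frac{v(v-1) - 2}{v} \;=\; v - 1 - \frac{2}{v}. \]
Because $v \geq 3 > 0$, the term $\frac{2}{v}$ is strictly positive, so $\bar{k} < v-1$, i.e., $v > \bar{k}+1$, as required.

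Strictly speaking there is no serious obstacle here: the statement is essentially the observation that the average degree of a non-complete simple graph is strictly less than $v-1$. The only subtlety is invoking the correct consequence of the hypothesis, namely that ``not complete multipartite'' is in particular stronger than ``not complete,'' which the paper records explicitly just before the definitions of $\bar{k}$ and $\bar{\lambda}$, so one may cite it without further comment. No use of $\bar{\lambda}$, Lemma \ref{lem2}, or any spectral input is needed for this lemma; those will enter only for the companion inequality $v + \bar{\lambda} - 2\bar{k} > 0$ in Lemma \ref{lem4bis}.
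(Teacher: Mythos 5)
Your proof is correct and is essentially the same elementary counting argument as the paper's: both reduce to the observation that a non-complete graph on $v$ vertices has average degree strictly less than $v-1$, using the fact (recorded in the paper just before the lemma) that $\Gamma$ is not complete. The only cosmetic difference is that you obtain strictness directly from a missing edge via $|E| \leq \binom{v}{2}-1$, whereas the paper sums $|\{x\}\cup\Gamma_1(x)| \leq v$ over all vertices and then rules out the equality case by noting it would force $\Gamma$ to be complete.
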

\begin{proof}
For every vertex $x$ of $\Gamma$, we have $1 + |\Gamma_1(x)| = | \{ x \} \cup \Gamma_1(x) | \leq |V| = v$. Summing over all vertices $x$ of $\Gamma$, we find $v + v \cdot \bar{k} \leq v^2$, i.e. $v \geq \bar{k}+1$. If equality occurs, then $\{ x \} \cup \Gamma_1(x) = V$ for every vertex $x$. This is only possible when $\Gamma$ is a complete graph, contrary to our assumption. So, $v > \bar{k}+1$.
\end{proof}

\begin{lemma} \label{lem4bis}
We have $v + \bar{\lambda} - 2\bar{k} > 0$.
\end{lemma}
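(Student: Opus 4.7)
The plan is to generalize Lemma \ref{lem1} by applying the edge-wise bound $|\Gamma_1(x)\cup\Gamma_1(y)|\leq v$ to every edge, summing over all edges, and combining the result with the Cauchy--Schwarz inequality on the degree sequence.

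The first key step is the edge-wise inequality: for each edge $xy$, writing $k_x:=|\Gamma_1(x)|$ for the degree of $x$, we have
$$k_x+k_y-\lambda_{xy} \;=\; |\Gamma_1(x)\cup\Gamma_1(y)| \;\leq\; v.$$
Summing over all edges of $\Gamma$ and using (i) that each vertex $x$ appears as an endpoint in exactly $k_x$ edges, so $\sum_{xy\in E}(k_x+k_y)=\sum_{x\in V}k_x^2$, (ii) that $\sum_{xy\in E}\lambda_{xy}=3N=v\bar k\bar\lambda/2$, and (iii) that $|E|=v\bar k/2$, we obtain
$$\sum_{x\in V}k_x^2 \;\leq\; v\cdot |E| + 3N \;=\; \frac{v\bar k(v+\bar\lambda)}{2}.$$
The second key step is Cauchy--Schwarz: $\sum_{x\in V}k_x^2\geq(\sum_x k_x)^2/v=v\bar k^2$. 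Chaining these two bounds gives $v\bar k^2\leq v\bar k(v+\bar\lambda)/2$, and dividing through by the positive quantity $v\bar k/2$ (positive since $\Gamma$ has at least one edge) immediately yields the weak inequality $v+\bar\lambda-2\bar k\geq 0$.

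The main subtlety, and the step I expect to be the only real obstacle, is upgrading this to a strict inequality. Equality in the chain would force both intermediate bounds to be tight simultaneously: Cauchy--Schwarz equality would force all degrees to be equal, so $\Gamma$ would be regular, while edge-wise equality would force $\Gamma_1(x)\cup\Gamma_1(y)=V$ for every edge $xy$. But this last condition is precisely what drives the second half of the proof of Lemma \ref{lem1}, where the authors show that non-adjacency becomes an equivalence relation on $V$, making $\Gamma$ a complete multipartite graph. This contradicts the standing assumption that $\Gamma$ is not complete multipartite, and hence the inequality is strict: $v+\bar\lambda-2\bar k>0$.
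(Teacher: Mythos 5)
Your proof is correct and follows essentially the same route as the paper's: the edge-wise bound $|\Gamma_1(x)\cup\Gamma_1(y)|\leq v$ summed over all edges, Cauchy--Schwarz on the degrees, and an equality analysis that falls back on the complete-multipartite argument of Lemma \ref{lem1}. The only cosmetic difference is that in the equality case the paper first uses the regularity (from Cauchy--Schwarz) to deduce that $\Gamma$ is edge-regular with $\lambda_{xy}=\bar\lambda$ before invoking Lemma \ref{lem1}, whereas you invoke the second half of that lemma's argument directly from the condition $\Gamma_1(x)\cup\Gamma_1(y)=V$, which is all that argument actually needs.
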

\begin{proof}
Put $k_x := |\Gamma_1(x)|$ for every vertex $x$ of $\Gamma$. For every edge $xy$ of $\Gamma$, we have $v = |V| \geq |\Gamma_1(x) \cup \Gamma_1(y)| = k_x + k_y - \lambda_{xy}$. Summing over all edges $xy$ of $\Gamma$, we find $\sum_{x \in V} k_x^2 - |E| \cdot \bar{\lambda} \leq v \cdot |E|$,
i.e.
\begin{equation} \label{eq11}
\frac{2}{v\bar{k}} \sum_{x \in V} k_x^2 - \bar{\lambda} \leq v.
\end{equation}
Now, by the Cauchy-Schwartz inequality, we have
\begin{equation} \label{eq11bis}
v^2 \bar{k}^2 = \Big( \sum_{x \in V} k_x \Big)^2 \leq \Big( \sum_{x \in V} 1 \Big) \cdot \Big( \sum_{x \in V} k_x^2 \Big) = v \cdot \sum_{x \in V} k_x^2,
\end{equation}
i.e.
\begin{equation} \label{eq21}
v \bar{k}^2 \leq \sum_{x \in V} k_x^2.
\end{equation}
By (\ref{eq11}) and (\ref{eq21}), we thus have $2\bar{k} - \bar{\lambda} \leq v$, i.e. $v + \bar{\lambda} - 2\bar{k} \geq 0$. In case of equality, so if $v + \bar{\lambda} - 2\bar{k}=0$, we know by the above that
\begin{enumerate}
\item[(a)] $\Gamma_1(x) \cup \Gamma_1(y) = V$ for every edge $xy$ of $\Gamma$;
\item[(b)] all $k_x$'s are equal, necessarily to $k:=\bar{k}$ (as equality holds in (\ref{eq11bis})).
\end{enumerate}
Condition (a) implies that $k_x + k_y - \lambda_{xy} = v$ for every edge $xy$. As $k_x = k_y = k$ and $v+\bar{\lambda}- 2k=0$, we thus have that $\lambda_{xy}=\bar{\lambda}$ for every edge $xy$ of $\Gamma$. The graph $\Gamma$ is thus an edge-regular graph. As in Lemma \ref{lem1}, we can then deduce that $\Gamma$ is a complete multipartite graph, contrary to our assumption. So, we have $v + \bar{\lambda} - 2\bar{k} > 0$.
\end{proof}

\begin{lemma} \label{lem5bis}
We have $(\bar{s}v-\bar{k}(\bar{s}+1))(v-\bar{k}-1)=(v-\bar{s}-1)(v+\bar{\lambda}-2\bar{k})\bar{s}$.
\end{lemma}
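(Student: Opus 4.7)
The plan is to verify the identity by direct algebraic manipulation, leveraging the fact that $\bar s$ is by construction a root of a specific quadratic polynomial. Recall from the definition preceding the lemma that $\bar s$ is the (positive) root of
\[ (v+\bar{\lambda}-2\bar{k}) X^2 + (\bar{k}^2-\bar{k}+\bar{\lambda}-\bar{\lambda} v) X - \bar{k}(v-\bar{k}-1) = 0, \]
so that $\bar s$ satisfies the key relation
\[ (v+\bar{\lambda}-2\bar{k})\bar s^{\,2} + (\bar{k}^2-\bar{k}+\bar{\lambda}-\bar{\lambda} v)\bar s = \bar{k}(v-\bar{k}-1). \]
The existence of a positive root here is exactly what was established in Lemmas \ref{lem3} and \ref{lem4bis}; no further structural input about $\Gamma$ is needed.

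I would then expand both sides of the claimed identity as polynomials in the formal variables $\bar s$, $\bar k$, $\bar\lambda$, $v$. The left-hand side $(\bar{s}v-\bar{k}(\bar{s}+1))(v-\bar{k}-1)$ is a polynomial linear in $\bar s$, while the right-hand side $(v-\bar{s}-1)(v+\bar{\lambda}-2\bar{k})\bar{s}$ is quadratic in $\bar s$. After expansion, collect terms by powers of $\bar s$ to see that the difference LHS $-$ RHS equals
\[ -(v+\bar{\lambda}-2\bar{k})\bar s^{\,2} - (\bar{k}^2-\bar{k}+\bar{\lambda}-\bar{\lambda} v)\bar s + \bar{k}(v-\bar{k}-1), \]
which vanishes by the quadratic relation above. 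In particular, the identity is a purely formal consequence of $\bar s$ being a root of the defining quadratic, and would hold for either root.

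There is really no obstacle here beyond careful bookkeeping in the expansion; the identity is essentially a restatement of the defining quadratic after moving the term $\bar k(v-\bar k-1)$ across and factoring. The only place to be slightly careful is making sure the signs of the coefficient $\bar k^2-\bar k+\bar\lambda-\bar\lambda v$ line up correctly when one passes from the quadratic to the expanded cross-multiplication form; a single sign error would make the verification fail, so I would do the expansion cleanly, one power of $\bar s$ at a time.
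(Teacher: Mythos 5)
Your proposal is correct and is essentially the paper's own proof: the paper likewise observes that the claimed identity is just a rearrangement of the defining quadratic relation $(v+\bar{\lambda}-2\bar{k})\bar{s}^2+(\bar{k}^2-\bar{k}+\bar{\lambda}-\bar{\lambda}v)\bar{s}-\bar{k}(v-\bar{k}-1)=0$, which you verify by expanding both sides. One tiny bookkeeping remark: the difference LHS $-$ RHS expands to $+\bigl((v+\bar{\lambda}-2\bar{k})\bar{s}^2+(\bar{k}^2-\bar{k}+\bar{\lambda}-\bar{\lambda}v)\bar{s}-\bar{k}(v-\bar{k}-1)\bigr)$ rather than its negative, but since that expression vanishes this does not affect the argument.
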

\begin{proof}
This is equivalent with $(v+\bar{\lambda}-2\bar{k}) \bar{s}^2 + (\bar{k}^2-\bar{k}+\bar{\lambda}-\bar{\lambda} v) \bar{s}-\bar{k} (v-\bar{k}-1)=0$.
\end{proof}

\begin{lemma} \label{lem6bis}
We have $v \not= \bar{s}+1$.
\end{lemma}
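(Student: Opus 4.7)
The plan is to argue by contradiction using the polynomial identity recorded in Lemma \ref{lem5bis}, which I rewrite as
\[ (\bar{s} v - \bar{k}(\bar{s}+1))(v - \bar{k} - 1) = (v - \bar{s} - 1)(v + \bar{\lambda} - 2\bar{k})\,\bar{s}. \]
First I would suppose, for contradiction, that $v = \bar{s}+1$. Then the factor $v - \bar{s} - 1$ on the right-hand side vanishes, so the entire right-hand side is zero. By Lemma \ref{lem3} we have $v > \bar{k}+1$, so the factor $v - \bar{k} - 1$ on the left-hand side is strictly positive. Therefore the remaining factor must vanish, giving $\bar{s} v = \bar{k}(\bar{s}+1)$.

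Next I would substitute $\bar{s} = v-1$ into that equation, obtaining $(v-1)v = \bar{k}\, v$. Since $v \geq 3$, we may cancel $v$ to deduce $\bar{k} = v-1$, i.e.\ $v = \bar{k}+1$. This directly contradicts the strict inequality $v > \bar{k}+1$ of Lemma \ref{lem3}, so the assumption $v = \bar{s}+1$ must fail. This completes the proof.

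I do not foresee any genuine obstacle: the whole argument is a short chase through the factored identity supplied by Lemma \ref{lem5bis} together with the strict inequality of Lemma \ref{lem3}. The only point worth a moment's care is noting that $\bar{s}>0$ (so, in particular, $\bar s$ is the positive root of the quadratic and the substitution $\bar s=v-1$ is legitimate) and that $v\neq 0$, so that cancelling $v$ in the step $(v-1)v=\bar{k}v$ is justified; both are immediate from the standing hypotheses $v\geq 3$ and $\bar{k}\geq 1$.
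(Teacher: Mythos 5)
Your proof is correct and is essentially the paper's own argument: both assume $v=\bar{s}+1$, use Lemma \ref{lem5bis} to kill the right-hand side, invoke $v>\bar{k}+1$ from Lemma \ref{lem3} to conclude $\bar{s}v=\bar{k}(\bar{s}+1)$, and deduce $\bar{k}=\bar{s}$ (equivalently $v=\bar{k}+1$), contradicting Lemma \ref{lem3}. The only cosmetic difference is that you cancel $v$ after substituting $\bar{s}=v-1$ while the paper cancels $\bar{s}+1$; this is the same computation.
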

\begin{proof}
If $v=\bar{s}+1$, then Lemmas \ref{lem3} and \ref{lem5bis} imply that $\bar{k}=\bar{s}$. So, $v=\bar{k}+1$, which is in contradiction with Lemma \ref{lem3}.
\end{proof}

\medskip \noindent In view of Lemma \ref{lem6bis}, we can define the following number:
\[  \bar e :=\frac{(\bar s+1)(\bar k-\bar s)}{v-\bar s-1}.  \]

\begin{lemma} \label{lem7bis}
We have $X^2+(\bar{\mu}-\bar{\lambda})X+(\bar{\mu}-\bar{k}) = (X-\theta_m)(X-\theta_M)$.
\end{lemma}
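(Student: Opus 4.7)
The plan is to verify that $\theta_m$ and $\theta_M$ are precisely the two roots of the quadratic $X^2+(\bar{\mu}-\bar{\lambda})X+(\bar{\mu}-\bar{k})$, by checking that their sum equals $\bar{\lambda}-\bar{\mu}$ and their product equals $\bar{\mu}-\bar{k}$. Since $\theta_m=-\bar{k}/\bar{s}$ and $\theta_M=(\bar{k}-\bar{\mu})\bar{s}/\bar{k}$, the product is
\[
\theta_m\theta_M = -\frac{\bar{k}}{\bar{s}}\cdot\frac{(\bar{k}-\bar{\mu})\bar{s}}{\bar{k}} = -(\bar{k}-\bar{\mu}) = \bar{\mu}-\bar{k},
\]
which is immediate and requires no use of the hypotheses.

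For the sum, I would compute
\[
\theta_m+\theta_M = \frac{-\bar{k}^2+(\bar{k}-\bar{\mu})\bar{s}^2}{\bar{k}\bar{s}},
\]
and then eliminate $\bar{s}^2$ via Lemma \ref{lem5bis}, rewritten as
\[
(v+\bar{\lambda}-2\bar{k})\bar{s}^2 = (\bar{\lambda}v-\bar{k}^2+\bar{k}-\bar{\lambda})\bar{s} + \bar{k}(v-\bar{k}-1).
\]
Using the identity $\bar{k}-\bar{\mu} = \bar{k}(v+\bar{\lambda}-2\bar{k})/(v-\bar{k}-1)$, which follows directly from the definition of $\bar{\mu}$, the term $(\bar{k}-\bar{\mu})\bar{s}^2$ simplifies to
\[
\frac{\bar{k}(\bar{\lambda}v-\bar{k}^2+\bar{k}-\bar{\lambda})\bar{s}}{v-\bar{k}-1}+\bar{k}^2,
\]
so the $\bar{k}^2$ cancels in the numerator and one obtains
\[
\theta_m+\theta_M = \frac{\bar{\lambda}(v-1)-\bar{k}(\bar{k}-1)}{v-\bar{k}-1}.
\]

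Finally, a direct computation of $\bar{\lambda}-\bar{\mu}$ from the definition of $\bar{\mu}$ gives the same expression $\bigl[\bar{\lambda}(v-1)-\bar{k}(\bar{k}-1)\bigr]/(v-\bar{k}-1)$, matching the required value. There is no serious obstacle here: the proof is purely algebraic manipulation, and the only nontrivial input is Lemma \ref{lem5bis}, which supplies the quadratic relation between $\bar{s}$, $\bar{k}$, $\bar{\lambda}$ and $v$ needed to eliminate $\bar{s}^2$ from the sum. The main thing to be careful about is keeping the rational expressions over the common denominator $v-\bar{k}-1$ so that the cancellations are transparent.
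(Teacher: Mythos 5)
Your proposal is correct and follows essentially the same route as the paper: both arguments are Vieta-style, noting that the product $\theta_m\theta_M=\bar{\mu}-\bar{k}$ is immediate and then reducing the remaining condition (you verify the sum equals $\bar{\lambda}-\bar{\mu}$; the paper verifies $\theta_m$ is a root) to the defining quadratic relation for $\bar{s}$, i.e.\ Lemma \ref{lem5bis}. The algebra you outline checks out, so no further changes are needed.
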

\begin{proof}
As $\theta_m \theta_M = \bar{\mu}-\bar{k}$, it suffices to prove that $\theta_m=-\frac{\bar{k}}{\bar{s}}$ is a root of  $X^2+(\bar{\mu}-\bar{\lambda})X+(\bar{\mu}-\bar{k})$, or equivalently that
\[ \bar{k}^2-(\bar{\mu}-\bar{\lambda})\bar{k}\bar{s} + (\bar{\mu}-\bar{k})\bar{s}^2=0.  \]
As $\bar{\mu} = \frac{\bar{k}(\bar{k}-\bar{\lambda}-1)}{v-\bar{k}-1}$, we thus need to verify that
\[  \bar{k}^2(v-\bar{k}-1)-(\bar{k}(\bar{k}-\bar{\lambda}-1)-\bar{\lambda}(v-\bar{k}-1))\bar{k}\bar{s}+(\bar{k}(\bar{k}-\bar{\lambda}-1)-\bar{k}(v-\bar{k}-1))\bar{s}^2=0. \]
After division by $-\bar{k}$, the latter equation is equivalent with $(v+\bar{\lambda}-2\bar{k})\bar{s}^2+(\bar{k}^2-\bar{k}+\bar{\lambda}-\bar{\lambda} v)\bar{s} -\bar{k}(v-\bar{k}-1)=0$, from which we indeed know that that this is correct.
\end{proof}

\medskip \noindent If $\Gamma$ is a strongly regular graph with parameters $(v,k,\lambda,\mu)$, then we know that the eigenvalues of $\Gamma$ distinct from $k$ are the roots of $X^2+(\mu-\lambda)X+(\mu-k) \in \R[X]$, i.e. they are equal to $\theta_m$ and $\theta_M$. 

\begin{lemma}
We have $\theta_m < 0 < \theta_M$.
\end{lemma}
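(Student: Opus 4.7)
The plan is to unpack the definitions of $\theta_m$ and $\theta_M$ and reduce each inequality to a positivity statement that has already been established in an earlier lemma. Since $\Gamma$ has at least one edge (it is not the empty graph), we have $\bar k > 0$, and by construction $\bar s > 0$ (it was defined as the positive root of a quadratic whose two real roots have opposite sign). The inequality $\theta_m < 0$ will then be immediate from $\theta_m = -\bar k/\bar s$.

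For the sign of $\theta_M$, since $\bar k > 0$ and $\bar s > 0$, it suffices to show $\bar k - \bar\mu > 0$. I would substitute the definition $\bar\mu = \bar k(\bar k - \bar\lambda - 1)/(v-\bar k-1)$ and put everything over the common denominator $v - \bar k - 1$, which gives
\[
\bar k - \bar\mu \;=\; \bar k \cdot \frac{(v - \bar k - 1) - (\bar k - \bar\lambda - 1)}{v - \bar k - 1} \;=\; \bar k \cdot \frac{v + \bar\lambda - 2\bar k}{v - \bar k - 1}.
\]
The numerator of the second factor is strictly positive by Lemma \ref{lem4bis}, and the denominator is strictly positive by Lemma \ref{lem3}. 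Hence $\bar k - \bar\mu > 0$, and therefore $\theta_M = \frac{\bar k - \bar\mu}{\bar k}\,\bar s > 0$.

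There is no real obstacle here; the lemma is essentially a bookkeeping consequence of the two positivity results $v > \bar k + 1$ and $v + \bar\lambda - 2\bar k > 0$ that were just proved. The only mildly non-trivial step is recognizing the algebraic identity $v - \bar k - 1 - (\bar k - \bar\lambda - 1) = v + \bar\lambda - 2\bar k$, which cleanly converts the expression for $\bar k - \bar\mu$ into a ratio of two quantities that have already been shown to be positive.
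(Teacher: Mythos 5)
Your proof is correct and follows essentially the same route as the paper: the paper likewise notes $\theta_m=-\bar k/\bar s<0$ and rewrites $\theta_M=\frac{\bar k-\bar\mu}{\bar k}\bar s=\frac{\bar s(v+\bar\lambda-2\bar k)}{v-\bar k-1}$, concluding positivity from Lemmas \ref{lem3} and \ref{lem4bis}, exactly the identity and the two positivity facts you use.
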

\begin{proof}
Obviously, $\theta_m = -\frac{\bar{k}}{\bar{s}} < 0$ and $\theta_M = \frac{\bar{k}-\bar{\mu}}{\bar{k}} \bar{s} = \frac{\bar{s}(v+\bar{\lambda}-2\bar{k})}{v-\bar{k}-1} > 0$ by Lemmas \ref{lem3} and \ref{lem4bis}.
\end{proof}

\begin{lemma} \label{lem8bis}
We have $\theta_M=\bar s-\bar e$.
\end{lemma}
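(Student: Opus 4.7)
The plan is to reduce the identity $\theta_M = \bar{s} - \bar{e}$ to Lemma \ref{lem5bis} by straightforward algebraic manipulation. First I would simplify $\bar{s} - \bar{e}$ by placing it over the common denominator $v - \bar{s} - 1$: the numerator is
\[
\bar{s}(v-\bar{s}-1) - (\bar{s}+1)(\bar{k}-\bar{s}),
\]
which expands and collapses to $\bar{s}v - \bar{k}(\bar{s}+1)$, so that
\[
\bar{s} - \bar{e} = \frac{\bar{s}v - \bar{k}(\bar{s}+1)}{v-\bar{s}-1}.
\]

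Next I would rewrite $\theta_M$ in a matching form. Using $\bar{\mu} = \frac{\bar{k}(\bar{k}-\bar{\lambda}-1)}{v-\bar{k}-1}$, a direct computation yields
\[
\frac{\bar{k}-\bar{\mu}}{\bar{k}} = \frac{(v-\bar{k}-1)-(\bar{k}-\bar{\lambda}-1)}{v-\bar{k}-1} = \frac{v+\bar{\lambda}-2\bar{k}}{v-\bar{k}-1},
\]
and hence
\[
\theta_M = \frac{\bar{s}(v+\bar{\lambda}-2\bar{k})}{v-\bar{k}-1}.
\]
(This is the same expression already appearing in the proof of the preceding lemma, and is well-defined since $v - \bar{k} - 1 > 0$ by Lemma \ref{lem3}.)

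Finally, I would equate the two displayed expressions. This is equivalent to
\[
(\bar{s}v - \bar{k}(\bar{s}+1))(v-\bar{k}-1) = (v-\bar{s}-1)(v+\bar{\lambda}-2\bar{k})\bar{s},
\]
which is precisely the content of Lemma \ref{lem5bis}. Dividing by $(v-\bar{s}-1)(v-\bar{k}-1)$, which is nonzero thanks to Lemmas \ref{lem3} and \ref{lem6bis}, completes the proof. There is no real obstacle here: the only thing to be careful about is not dividing by zero, and both denominators are handled by the lemmas already established.
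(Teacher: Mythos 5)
Your proof is correct and follows essentially the same route as the paper: rewrite $\bar s-\bar e$ as $\frac{\bar s v-\bar k(\bar s+1)}{v-\bar s-1}$, rewrite $\theta_M$ as $\frac{\bar s(v+\bar\lambda-2\bar k)}{v-\bar k-1}$, and observe that the resulting cross-multiplied identity is exactly Lemma \ref{lem5bis}, with the nonvanishing of the denominators guaranteed by Lemmas \ref{lem3} and \ref{lem6bis}.
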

\begin{proof}
We compute that $\bar s-\bar e = \bar s - \frac{(\bar s+1)(\bar k-\bar s)}{v-\bar s-1} = \frac{\bar s v-\bar k(\bar s+1)}{v-\bar s-1}$. In order for this to be equal to $\theta_M = \frac{\bar k-\bar \mu}{\bar k}\bar s = \frac{(v+\bar\lambda-2\bar k)\bar s}{v-\bar k-1}$, we must have that $(\bar s v-\bar k(\bar s+1))(v-\bar k-1)=(v-\bar s-1)(v+\bar \lambda-2\bar k)\bar s$. By that is precisely Lemma \ref{lem5bis}.
\end{proof}

\begin{lemma} \label{lem9bis}
We have $\bar k > \theta_M$, $\bar k=\theta_M$ or $\bar k < \theta_M$, depending on whether $\bar k > \bar\lambda+1$, $\bar k=\bar\lambda+1$ or $\bar k < \bar\lambda +1$, or equivalently, depending on whether $\bar\mu > 0$, $\bar\mu=0$ or $\bar\mu < 0$.
\end{lemma}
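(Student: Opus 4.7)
The plan is to evaluate the monic quadratic $f(X) := X^2 + (\bar\mu - \bar\lambda)X + (\bar\mu - \bar k)$ at $X = \bar k$ and read off the sign of $\bar k - \theta_M$. By Lemma \ref{lem7bis} we have $f(X) = (X - \theta_m)(X - \theta_M)$, so $f(\bar k) = (\bar k - \theta_m)(\bar k - \theta_M)$. Since $\theta_m < 0 < \bar k$ (by the unnamed sign lemma preceding Lemma \ref{lem8bis} together with $\bar k > 0$), the factor $\bar k - \theta_m$ is strictly positive, hence $\bar k - \theta_M$ inherits the same sign as $f(\bar k)$. This reduces the first trichotomy to determining the sign of $f(\bar k)$.

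First I would substitute $\bar\mu = \frac{\bar k(\bar k - \bar\lambda - 1)}{v - \bar k - 1}$ into the rearrangement $f(\bar k) = \bar k(\bar k - \bar\lambda - 1) + \bar\mu(\bar k + 1)$ and combine over a common denominator. A direct calculation gives
\[ f(\bar k) = \bar k(\bar k - \bar\lambda - 1)\left[1 + \frac{\bar k + 1}{v - \bar k - 1}\right] = \frac{\bar k \, v \, (\bar k - \bar\lambda - 1)}{v - \bar k - 1}. \]
Because $\bar k > 0$, $v > 0$ and $v - \bar k - 1 > 0$ (Lemma \ref{lem3}), the sign of $f(\bar k)$ coincides with the sign of $\bar k - \bar\lambda - 1$. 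Combined with the previous paragraph, this shows that $\bar k > \theta_M$, $\bar k = \theta_M$, or $\bar k < \theta_M$ according as $\bar k > \bar\lambda + 1$, $\bar k = \bar\lambda + 1$, or $\bar k < \bar\lambda + 1$, which is the first claim.

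The equivalence of this latter trichotomy with the sign trichotomy of $\bar\mu$ is immediate from the definition $\bar\mu = \frac{\bar k(\bar k - \bar\lambda - 1)}{v - \bar k - 1}$, since the prefactor $\frac{\bar k}{v - \bar k - 1}$ is strictly positive by Lemma \ref{lem3}. There is no real obstacle in this proof; the only care required is to simplify $f(\bar k)$ cleanly and to remember to invoke the positivity of both $\bar k - \theta_m$ and $v - \bar k - 1$ at the appropriate places.
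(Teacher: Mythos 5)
Your proof is correct and follows essentially the same route as the paper: both evaluate the quadratic of Lemma \ref{lem7bis} at $X=\bar k$, obtaining $(\bar k-\theta_m)(\bar k-\theta_M)=\frac{\bar k v(\bar k-\bar\lambda-1)}{v-\bar k-1}=\bar\mu v$, and then use $\bar k-\theta_m>0$ together with $\bar k>0$ and $v-\bar k-1>0$ to read off the trichotomy. The only cosmetic difference is that the paper records the value as $\bar\mu v$ while you express it directly in terms of $\bar k-\bar\lambda-1$; these are the same by the definition of $\bar\mu$.
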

\begin{proof}
Taking into account Lemma \ref{lem7bis} and the definition of $\bar\mu$, we see that $(\bar k-\theta_m)(\bar k-\theta_M)=\bar k^2+(\bar\mu-\bar\lambda)\bar k+(\bar\mu-\bar k)= \bar\mu v$. As $\theta_m < 0$, we have $\bar k-\theta_m > 0$ and so $\bar k-\theta_M$ and $\bar\mu = \frac{\bar k(\bar k-\bar\lambda-1)}{v-\bar k-1}$ are either both 0, both negative or both positive.
\end{proof}

\medskip \noindent The following lemma will be useful in our discussion. It is precisely Theorem 3.2.1 of \cite{crs} applied to the graph $\Gamma$. 

\begin{lemma} \label{help1}
The largest eigenvalue of $\Gamma$ is at least $\bar k$ with equality if and only if $\Gamma$ is regular.
\end{lemma}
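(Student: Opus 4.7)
The plan is to derive the bound from the Rayleigh quotient characterization of the largest eigenvalue of a real symmetric matrix. Let $A$ denote the adjacency matrix of $\Gamma$ and let $\mathbf{1} \in \R^v$ be the all-ones vector. Since $A$ is real symmetric, its largest eigenvalue $\theta_1$ satisfies
\[ \theta_1 = \max_{x \in \R^v \setminus \{0\}} \frac{x^T A x}{x^T x}. \]
Evaluating this quotient at $x = \mathbf{1}$ gives $\mathbf{1}^T A \mathbf{1} = \sum_{i=1}^v k_i = 2|E| = v\bar k$ and $\mathbf{1}^T \mathbf{1} = v$, hence $\theta_1 \geq \bar k$. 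This already yields the inequality in the statement.

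For the equality case I would argue in two directions. If $\theta_1 = \bar k$, then $\mathbf{1}$ attains the maximum in the Rayleigh quotient; by the standard characterization (expand $\mathbf{1}$ in an orthonormal basis of eigenvectors of $A$ and inspect the resulting diagonal expression), this forces $\mathbf{1}$ to lie entirely in the $\theta_1$-eigenspace, i.e.\ $A\mathbf{1} = \bar k \mathbf{1}$. But the $i$-th coordinate of $A\mathbf{1}$ is the degree $k_i$ of vertex $i$, so $k_i = \bar k$ for every $i$ and $\Gamma$ is regular. Conversely, if $\Gamma$ is $k$-regular then $A \mathbf{1} = k \mathbf{1}$ shows that $k$ is an eigenvalue of $A$. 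Since $k$ equals both $\bar k$ and the maximum vertex degree, and since every eigenvalue of the adjacency matrix is bounded in absolute value by the maximum degree, we conclude $\theta_1 = k = \bar k$.

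There is no serious obstacle here; the only non-trivial input is the equality clause of the Rayleigh quotient, a standard fact about Hermitian matrices. Since the paper already cites Theorem~3.2.1 of \cite{crs} for the full statement, the self-contained derivation above is essentially a sanity check. The only mild care needed is to note that connectedness of $\Gamma$ plays no role in the argument, which is consistent with the way the lemma will subsequently be used in this section.
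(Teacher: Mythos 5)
Your proof is correct. The paper gives no argument of its own for this lemma: it simply invokes Theorem~3.2.1 of \cite{crs}, and your Rayleigh-quotient derivation is exactly the standard proof of that cited result --- the inequality by evaluating the quotient at the all-ones vector, the forward equality case by noting that an extremal vector must lie in the top eigenspace so that $A\mathbf{1}=\bar k\mathbf{1}$ forces all degrees to equal $\bar k$, and the converse via the maximum-degree bound on the spectral radius. So in substance you follow the same route, with the added benefit of being self-contained; your remark that connectedness plays no role is also apt, since the paper later applies the lemma (in the proofs of Lemmas \ref{lem10} and \ref{lem12}) in situations where $\Gamma$ is allowed to be disconnected.
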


\begin{lemma} \label{help3}
If $\Gamma$ is regular, then $\bar k \geq \bar \lambda +1$.
\end{lemma}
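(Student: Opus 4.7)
The plan is to exploit the elementary combinatorial fact that whenever $xy$ is an edge, any common neighbour of $x$ and $y$ lies in $\Gamma_1(x)\setminus\{y\}$, so $\lambda_{xy} \leq |\Gamma_1(x)| - 1$.

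Concretely, since $\Gamma$ is assumed to be regular of valency $\bar k$, we have $|\Gamma_1(x)| = \bar k$ for every vertex $x$. Fix any edge $xy$ of $\Gamma$; every vertex counted by $\lambda_{xy} = |\Gamma_1(x) \cap \Gamma_1(y)|$ is a neighbour of $x$ different from $y$, hence $\lambda_{xy} \leq \bar k - 1$. Note that the existence of an edge is guaranteed because $\Gamma$ is not a graph without edges (a complete multipartite graph with one part would be the only such, and it is excluded by hypothesis), so $\bar k \geq 1$ and the inequality is nonvacuous.

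Averaging $\lambda_{xy} \leq \bar k - 1$ over all edges $xy \in E$, and using that by definition $\bar\lambda$ is the mean of the $\lambda_{xy}$'s over the edge set, we obtain $\bar\lambda \leq \bar k - 1$, that is, $\bar k \geq \bar\lambda + 1$, which is the desired inequality.

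There is essentially no obstacle here: the proof is a one-line edge count plus averaging. The only subtle point worth verifying is that the edge set is nonempty so that the average $\bar\lambda$ is well defined; this is ensured by the standing assumption that $\Gamma$ is not a complete multipartite graph (and in particular has at least one edge, since $v \geq 3$ and $\Gamma$ is a genuine graph rather than the empty graph on its vertex set).
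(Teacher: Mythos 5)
Your proof is correct and follows essentially the same route as the paper: the paper likewise notes that for every edge $xy$ of a regular graph one has $\lambda_{xy}+1 = |\{y\}\cup(\Gamma_1(x)\cap\Gamma_1(y))| \leq |\Gamma_1(x)| = \bar k$ and then concludes $\bar\lambda + 1 \leq \bar k$ by averaging over edges. Your extra remark on the edge set being nonempty is fine and matches the paper's standing assumption that $\Gamma$ is not a complete multipartite graph (hence not edgeless).
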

\begin{proof}
For a regular graph $\Gamma$ of degree $k:=\bar k$, we have $\lambda_{xy} + 1 = |\{ y \} \cup (\Gamma_1(x) \cap \Gamma_1(y))| \leq |\Gamma_1(x)|=k$ for every edge $xy$ of $\Gamma$, implying that $\bar\lambda +1 \leq k$. 
\end{proof}

\medskip \noindent If $\Gamma$ is a strongly regular graph, we thus know that $\bar k \geq \bar \lambda +1$ and that $\theta_m$ is the smallest eigenvalue. In fact, we can prove the following.

\begin{lemma} \label{lem10}
Suppose $\bar k \geq \bar \lambda +1$ and let $\theta_{min}$ denote the smallest eigenvalue of $\Gamma$. Then $\theta_{min} \leq \theta_m$ and equality holds if and only if $\Gamma$ is a strongly regular graph.
\end{lemma}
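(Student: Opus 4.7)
The plan is to exploit a carefully chosen cubic second moment of the eigenvalues. Set $g(X) := (X-\theta_M)^2(X-\theta_m)$ and $S := \sum_{\theta \in \Omega} g(\theta)$. Expanding $g$ and applying the power-sum identities of Lemma \ref{lem2}, together with the Viète relations $\theta_m + \theta_M = \bar\lambda - \bar\mu$ and $\theta_m\theta_M = \bar\mu - \bar k$ coming from Lemma \ref{lem7bis}, the sum simplifies to $S = v\bar\mu(\bar k - \theta_M)$. The identity $(\bar k - \theta_m)(\bar k - \theta_M) = v\bar\mu$, explicitly derived inside the proof of Lemma \ref{lem9bis}, then says that this is nothing but $g(\bar k)$; so the full eigenvalue sum collapses to a single evaluation at $\bar k$.

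Next I would check that $g$ is non-decreasing on $[\bar k, +\infty)$ and strictly increasing past $\bar k$. The derivative factors as $g'(X) = (X - \theta_M)(3X - 2\theta_m - \theta_M)$, and under the hypothesis $\bar k \geq \bar\lambda + 1$ Lemma \ref{lem9bis} gives $\theta_M \leq \bar k$, while $\theta_m < 0$ makes the second factor positive for $X \geq \bar k$. Combined with $\theta_{max} \geq \bar k$ from Lemma \ref{help1}, this yields $g(\theta_{max}) \geq g(\bar k) = S$, so
\[ \sum_{\theta \in \Omega_{\theta_{max}}} (\theta - \theta_M)^2 (\theta - \theta_m) \;=\; S - g(\theta_{max}) \;\leq\; 0. \]

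Assume for contradiction that $\theta_{min} > \theta_m$. Then $\theta - \theta_m > 0$ for every $\theta \in \Omega_{\theta_{max}}$, so every summand on the left is non-negative; the upper bound forces each summand to vanish and $g(\theta_{max}) = g(\bar k)$. Strict monotonicity of $g$ past $\bar k$ yields $\theta_{max} = \bar k$, so $\Gamma$ is regular by Lemma \ref{help1}, and each vanishing summand forces $\theta = \theta_M$. Hence $\Gamma$ has at most two distinct eigenvalues $\bar k$ and $\theta_M$, which by Lemma \ref{lem1bis}(a)--(b) makes $\Gamma$ either edgeless (contradicting $\bar k > 0$) or a disjoint union of $t$-cliques (whose second eigenvalue is $-1$, contradicting $\theta_M > 0$; in the degenerate subcase $\theta_M = \bar k$ one instead gets a single eigenvalue and then $\bar k = 0$ by Lemma \ref{lem1bis}(a), again a contradiction). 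This proves $\theta_{min} \leq \theta_m$.

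For the equality case $\theta_{min} = \theta_m$, the identical argument with $(\theta - \theta_m) \geq 0$ in place of strict positivity shows that every $\theta \in \Omega_{\theta_{max}}$ lies in $\{\theta_m, \theta_M\}$; together with $\theta_{max} = \bar k$ this makes $\Gamma$ a regular graph with at most three distinct eigenvalues, hence strongly regular by Lemma \ref{lem1bis}(c). The reverse implication is immediate from the eigenvalue formula for strongly regular graphs recalled right after Lemma \ref{lem7bis}. The principal obstacle is discovering the cubic $g$ whose eigenvalue trace-sum collapses to $g(\bar k)$; the monotonicity of $g$ and the endgame via Lemma \ref{lem1bis} are then mechanical.
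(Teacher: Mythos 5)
Your argument follows the paper's proof almost verbatim: the same cubic $(X-\theta_M)^2(X-\theta_m)$, the same collapse of $\sum_{\theta\in\Omega}(\theta-\theta_M)^2(\theta-\theta_m)$ to $\bar\mu v(\bar k-\theta_M)=(\bar k-\theta_M)^2(\bar k-\theta_m)$ via Lemmas \ref{lem2}, \ref{lem7bis} and \ref{lem9bis}, and the same monotonicity of the cubic on $[\bar k,\infty)$ combined with Lemma \ref{help1}. The inequality $\theta_{min}\leq\theta_m$ is handled correctly: your contradiction argument via Lemma \ref{lem1bis}(a)--(b) (using $\bar k>0$ and $\theta_M>0$) is an acceptable substitute for the paper's observation that $\Omega_{\theta^{\ast}}$ contains at least two distinct elements.

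There is, however, a genuine gap in the equality case. From ``$\Gamma$ is regular with all eigenvalues in $\{\bar k,\theta_m,\theta_M\}$'' you conclude that $\Gamma$ is strongly regular by Lemma \ref{lem1bis}(c), but that lemma requires $\Gamma$ to be \emph{connected}, and connectedness is not a hypothesis of Lemma \ref{lem10}. The implication you invoke is false in general: the disjoint union of two Petersen graphs is regular with exactly three distinct eigenvalues and is not strongly regular. Nor can the disconnected case be dismissed here, since it genuinely attains equality: a disjoint union of at least two copies of $K_m$ ($m\geq 2$) satisfies all hypotheses of the lemma and has $\theta_{min}=-1=\theta_m$. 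The missing step (which the paper supplies) is short: if $\Gamma$ is disconnected, then being regular of valency $\bar k$ it has $\bar k$ as an eigenvalue of multiplicity at least two, so $\bar k\in\Omega_{\bar k}\subseteq\{\theta_m,\theta_M\}$, forcing $\bar k=\theta_M$ (as $\theta_m<0$); hence $\Gamma$ has at most two distinct eigenvalues, and Lemma \ref{lem1bis}(b) shows it is a disjoint union of complete graphs of the same order, which is strongly regular in the sense used in the paper. With this case added, your proof of the equality direction is complete; the converse direction via the remark after Lemma \ref{lem7bis} is fine.
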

\begin{proof} 
Let $\theta^\ast$ be the largest eigenvalue of $\Gamma$, and put $\Omega' := \Omega_{\theta^\ast}$. By Lemmas \ref{lem9bis} and \ref{help1}, we have $\theta^\ast \geq \bar k \geq \theta_M$. By relying on Lemmas 2.16 and 2.21, we compute
\begin{eqnarray}
 &    & \sum_{\omega \in \Omega} (\omega-\theta_M)^2(\omega-\theta_m) \nonumber \\
 & = & \sum_{\omega \in \Omega} (\omega^2 + (\bar \mu-\bar \lambda) \omega + (\bar \mu-\bar k))(\omega - \theta_M) \nonumber \\
 & = & \sum_{\omega \in \Omega} \Big( \omega^3 + (\bar \mu-\bar \lambda-\theta_M)\omega^2 + (\bar \mu-\bar k-\theta_M(\bar \mu-\bar \lambda)) \omega - \theta_M(\bar \mu-\bar k)  \Big) \nonumber \\
 & = & v\bar k\bar \lambda + (\bar \mu - \bar \lambda - \frac{\bar k-\bar \mu}{\bar k}\bar s)v\bar k - \frac{\bar k-\bar \mu}{\bar k}\bar s(\bar \mu-\bar k)v \nonumber \\
 & = & \bar \mu v\bar k - \frac{\bar \mu(\bar k-\bar \mu)\bar s v}{\bar k}. \nonumber
\end{eqnarray}
This number is equal to
\[ (\bar k-\theta_M)^2(\bar k-\theta_m) = (\bar k^2+(\bar \mu-\bar \lambda) \bar k+(\bar \mu-\bar k))(\bar k-\theta_M) = \bar \mu v(\bar k-\theta_M) = \bar \mu v(\bar k - \frac{\bar k-\bar \mu}{\bar k}\bar s). \]
As the map $\R \to \R; x \mapsto (x-\theta_M)^2(x-\theta_m)$ attains a local maximum for $x=\frac{2\theta_m+\theta_M}{3} < \theta_M$ and a local minimum for $x=\theta_M \leq \bar k \leq \theta^\ast$, we have that
\begin{equation} \label{eq33}
(\theta^\ast-\theta_M)^2(\theta^\ast-\theta_m) \geq (\bar k-\theta_M)^2(\bar k-\theta_m) = \sum_{\omega \in \Omega} (\omega-\theta_M)^2(\omega-\theta_m),
\end{equation}
i.e.
\begin{equation} \label{eq43}
\sum_{\omega \in \Omega'} (\omega-\theta_M)^2(\omega-\theta_m) \leq 0.
\end{equation}
By Lemma 2.15(a)+(b) and the fact that $\Gamma$ is not complete nor a graph without edges, we know that the multiset $\Omega' = \Omega_{\theta^\ast}$ contains at least two distinct elements. Equation (\ref{eq43}) then implies that there exists an $\omega \in \Omega'$ with $\omega \leq \theta_m$, i.e. $\theta_{min} \leq \theta_m$.

Suppose now that $\theta_{min}=\theta_m$. Then $\omega \geq \theta_m$ for every $\omega \in \Omega'$ and so $\sum_{\omega \in \Omega'} (\omega-\theta_M)^2(\omega-\theta_m) \geq 0$. In combination with (\ref{eq43}), this implies that $\omega \in \{ \theta_M,\theta_m \}$ for every $\omega \in \Omega'$. If $\theta^\ast \not= \bar k$, then the inequalities in (\ref{eq33}) and (\ref{eq43}) would be strict, which is impossible. So, $\theta^\ast=\bar k$, and every eigenvalue of $\Gamma$ is equal to $\bar k$, $\theta_m$ or $\theta_M$. The fact that $\theta^\ast=\bar k$ implies by Lemma \ref{help1} that $\Gamma$ is regular with valency $k := \bar k$.

If $\Gamma$ is connected, then $\Gamma$ is strongly regular by Lemma 2.15. If $\Gamma$ is not connected, then each connected component of $\Gamma$ is regular of degree $k$ and as each $\omega \in \Omega_{\theta^\ast}=\Omega_k$ belongs to $\{ \theta_M,\theta_m \}$ we then know that $\Gamma$ has at most two eigenvalues, implying by Lemma 2.15(a)+(b) that $\Gamma$ is a disjoint union of at least two complete graphs of the same order. In this case, $\Gamma$ is thus also a strongly regular graph.
\end{proof}

\medskip \noindent The following is now a consequence of Lemmas \ref{help3} and \ref{lem10}.

\begin{corollary} \label{co11bis}
If $\Gamma$ is regular with smallest eigenvalue $\theta_{min}$, then $\theta_{min} \leq \theta_m$ with equality if and only if $\Gamma$ is strongly regular.
\end{corollary}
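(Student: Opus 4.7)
The plan is to derive this immediately by chaining the two preceding lemmas. Specifically, Lemma \ref{help3} supplies the hypothesis needed for Lemma \ref{lem10}: regularity of $\Gamma$ forces $\bar k \geq \bar \lambda + 1$ (this is just the trivial count that in a regular graph of valency $\bar k = k$, the common neighbourhood of an edge $xy$ together with $y$ itself sits inside $\Gamma_1(x)$, giving $\lambda_{xy} + 1 \leq k$, and averaging over edges yields $\bar\lambda + 1 \leq \bar k$).

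With the inequality $\bar k \geq \bar \lambda + 1$ in hand, Lemma \ref{lem10} applies directly and gives both conclusions at once: $\theta_{\min} \leq \theta_m$, and equality occurs if and only if $\Gamma$ is strongly regular. So the proof reduces to a one-line invocation, in the forward direction by composing the two lemmas, and in the equality direction by using the equality clause of Lemma \ref{lem10}.

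The only thing to verify is that no additional hypothesis slips in. Lemma \ref{lem10} was stated for an arbitrary graph $\Gamma$ satisfying $\bar k \geq \bar\lambda + 1$ (with the standing assumption from the section that $\Gamma$ is not a complete multipartite graph, not a complete graph, and not edgeless), whereas the corollary assumes $\Gamma$ is regular. Since regularity is strictly stronger than the hypothesis needed in Lemma \ref{lem10}, and since Lemma \ref{help3} confirms that regularity implies $\bar k \geq \bar\lambda + 1$, everything fits and no case analysis is required. There is essentially no obstacle; the statement is really a corollary in the literal sense.
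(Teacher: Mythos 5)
Your proposal matches the paper exactly: the paper also obtains this corollary as an immediate consequence of Lemma \ref{help3} (regularity gives $\bar k \geq \bar\lambda+1$) combined with Lemma \ref{lem10}. Nothing is missing; the argument is correct.
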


\medskip \noindent If $\Gamma$ is a strongly regular graph with valency $k$, then we know that $\theta_M$ is the largest eigenvalue in $\Omega_k$. In fact, the following can be proved.

\begin{lemma} \label{lem12}
Suppose $\Gamma$ is regular with valency $k := \bar k$ and let $\theta_{Max}$ denote the largest eigenvalue of $\Gamma$ in $\Omega' := \Omega_k$. Then $\theta_{Max} \geq \theta_M$ with equality if and only if $\Gamma$ is strongly regular.
\end{lemma}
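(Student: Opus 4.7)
The plan is to mirror the argument of Lemma \ref{lem10}, but using the polynomial
\[ q(x) := (x-\theta_m)^2(x-\theta_M), \]
whose sign profile is complementary to the one exploited there: $q(x) \leq 0$ on $(-\infty,\theta_M]$ with equality exactly at $x \in \{\theta_m,\theta_M\}$, and $q(x) > 0$ for $x > \theta_M$. First I would rewrite $q(x) = (x-\theta_m)\bigl(x^2+(\bar\mu-\bar\lambda)x+(\bar\mu-\bar k)\bigr)$ via Lemma \ref{lem7bis}, expand, and apply Lemma \ref{lem2} to obtain $\sum_{\omega \in \Omega} q(\omega) = v\bar\mu(\bar k-\theta_m)$. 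A companion calculation using $\theta_m+\theta_M = \bar\lambda-\bar\mu$, $\theta_m\theta_M = \bar\mu-\bar k$ together with the defining relation $\bar\mu(v-\bar k-1) = \bar k(\bar k-\bar\lambda-1)$ gives $(\bar k-\theta_m)(\bar k-\theta_M) = \bar\mu v$, whence $q(\bar k) = v\bar\mu(\bar k-\theta_m)$. Since $\Gamma$ is regular of valency $\bar k$, Lemma \ref{help1} places $\bar k$ in $\Omega$, and isolating this term yields the key identity $\sum_{\omega \in \Omega'} q(\omega) = 0$.

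For the inequality, suppose for contradiction that $\theta_{Max} < \theta_M$. Then every $\omega \in \Omega'$ lies in $(-\infty,\theta_M)$, so $q(\omega) \leq 0$ with equality only at $\omega = \theta_m$; the sum vanishing forces every element of $\Omega'$ to equal $\theta_m$. In particular $\bar k$ has multiplicity one in $\Omega$, so $\Gamma$ has exactly two distinct eigenvalues. Lemma \ref{lem1bis}(b) then implies that $\Gamma$ is a disjoint union of complete graphs of the same order, and the multiplicity of $\bar k$ equalling the number of components forces $\Gamma$ to be a single complete graph, contradicting the standing assumption that $\Gamma$ is not complete. Hence $\theta_{Max} \geq \theta_M$.

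For the equality case, if $\theta_{Max} = \theta_M$, the analogous sign analysis shows every $\omega \in \Omega'$ lies in $\{\theta_m,\theta_M\}$, so the spectrum of $\Gamma$ is contained in $\{\bar k,\theta_M,\theta_m\}$. If $\Gamma$ is connected, then Lemma \ref{lem1bis}(c) delivers strong regularity. If $\Gamma$ is disconnected, then $\bar k$ occurs in $\Omega$ with multiplicity at least two, so $\bar k \in \Omega'$ and consequently $\bar k = \theta_M$; by Lemma \ref{lem9bis} this forces $\bar\mu = 0$, hence $\lambda_{xy} = \bar k-1$ for every edge, so every pair of adjacent vertices has the same closed neighbourhood and the connected components are cliques of order $\bar k+1$, which is classified as strongly regular in the convention of this paper (exactly as in the final paragraph of the proof of Lemma \ref{lem10}). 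The converse direction is routine from the definition of a strongly regular graph. The only non-algebraic obstacle is the disconnected regular subcase of the equality analysis, which parallels the corresponding reduction in Lemma \ref{lem10}.
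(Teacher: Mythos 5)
Your proposal is correct and takes essentially the same route as the paper: the same identity $\sum_{\omega\in\Omega'}(\omega-\theta_M)(\omega-\theta_m)^2=0$, obtained from Lemmas \ref{lem2} and \ref{lem7bis} together with $(\bar k-\theta_m)(\bar k-\theta_M)=\bar\mu v$, followed by the same sign analysis and the same connected/disconnected case split in the equality case. The only cosmetic differences are that you derive the inequality by contradiction (rather than citing that $\Omega'$ contains two distinct elements) and handle the disconnected equality subcase via $\bar k=\theta_M\Rightarrow\bar\mu=0$ and all $\lambda_{xy}=\bar k-1$ instead of invoking Lemma \ref{lem1bis}(b); both variants are valid.
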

\begin{proof}
By relying on Lemmas 2.16 and 2.21, we compute
\begin{eqnarray}
 &    & \sum_{\omega \in \Omega} (\omega-\theta_M)(\omega-\theta_m)^2 \nonumber \\
 & = & \sum_{\omega \in \Omega} (\omega^2 + (\bar \mu-\bar \lambda) \omega + (\bar \mu-\bar k))(\omega - \theta_m) \nonumber \\
 & = & \sum_{\omega \in \Omega} \Big( \omega^3 + (\bar \mu-\bar \lambda-\theta_m)\omega^2 + (\bar \mu-\bar k-\theta_m(\bar \mu-\bar \lambda))\omega-\theta_m(\bar \mu-\bar k)  \Big) \nonumber \\
 & = & v\bar k\bar \lambda + (\bar \mu - \bar \lambda +\frac{\bar k}{\bar s})v\bar k +\frac{\bar k}{\bar s}(\bar \mu-\bar k)v \nonumber \\
 & = & \bar \mu v\bar k + \frac{\bar \mu v\bar k}{\bar s}. \nonumber
\end{eqnarray}
This number is equal to
\[ (\bar k-\theta_M)(\bar k-\theta_m)^2 = (\bar k^2+(\bar \mu-\bar \lambda) \bar k+(\bar \mu-\bar k))(\bar k-\theta_m) = \bar \mu v(\bar k-\theta_m) = \bar \mu v(\bar k+\frac{\bar k}{\bar s}). \]
We thus have
\begin{equation} \label{eq64}
\sum_{\omega \in \Omega'} (\omega-\theta_M)(\omega-\theta_m)^2 = 0.
\end{equation}
By Lemma 2.15(a)+(b) and the fact that $\Gamma$ is not complete nor a graph without edges, we know that the multiset $\Omega' = \Omega_k$ contains at least two distinct elements. Equation (\ref{eq64}) then implies that there exists an $\omega \in \Omega'$ with $\omega \geq \theta_M$, i.e. $\theta_{Max} \geq \theta_M$.

Suppose now that $\theta_{Max} = \theta_M$. Then $\omega \leq \theta_M$ for every $\omega \in \Omega'$. In combination with (\ref{eq64}), this implies that $\omega \in \{ \theta_M,\theta_m \}$ for every $\omega \in \Omega'$.

If $\Gamma$ is connected, then $\Gamma$ is strongly regular by Lemma 2.15. If $\Gamma$ is not connected, then each connected component of $\Gamma$ is regular of degree $k$ and as each $\omega \in  \Omega' = \Omega_k$ belongs to $\{ \theta_M,\theta_m \}$, we then know that $\Gamma$ has at most two eigenvalues, implying by Lemma 2.15(a)+(b) that $\Gamma$ is a disjoint union of at least two complete graphs of the same order. In this case, $\Gamma$ is thus also a strongly regular graph.
\end{proof}

\medskip \noindent Theorem \ref{Neumaier4mainthm} is now implied by Corollary \ref{co11bis} and Lemma \ref{lem12}.

\subsection{Characterizations using the Hoffman's ratio bound }\label{Sec:Neumaier3file}

Delsarte [\cite{Delsarte}, page 31] showed that if $C$ is a clique in a strongly regular graph $\Gamma$ with parameters $(v,k,\lambda,\mu)$, then  
\begin{equation} \label{Delsarte1}
|C| \leq 1 - \frac{k}{\theta_{\min}},
\end{equation}
with $\theta_{\min}$ the smallest eigenvalue of $\Gamma$.  A coclique $C'$ in $\Gamma$ can be regarded as a clique in the complementary graph $\overline{\Gamma}$ of $\Gamma$ and so we have
\begin{equation} \label{Delsarte2}
|C'| \leq 1 - \frac{v-k-1}{\theta_{Cmin}} = \frac{v}{1-\frac{k}{\theta_{\min}}},
\end{equation}
with $\theta_{Cmin}$ the smallest eigenvalue of $\overline{\Gamma}$. We call a (co)clique that meets the \emph{Delsarte bound} in (\ref{Delsarte1}) or (\ref{Delsarte2}) a  \emph{Delsarte-(co)clique}. Note that many people call them \emph{Hoffman-(co)cliques}. The bound for strongly regular graphs, however, was first given by Delsarte. Hoffman later generalized it to arbitrary regular graphs, and the bound is often called the {\em ratio bound}. Specifically, Hoffman showed that
\begin{equation} \label{Hoffman1}
|C'| \leq  v \Big( 1-\frac{k}{\theta_{\min}} \Big)^{-1}
\end{equation}
for any coclique $C'$ in a regular graph with valency $k \geq 1$ on $v$ vertices having smallest eigenvalue $\theta_{\min}$, with equality if and only if every vertex outside $C$ has the same number of neighbours in $C$. A similar characterization for the equality case holds for the inequality (\ref{Delsarte1}). For proofs and more background information on these bounds, see [\cite{bcn}, Propositions 1.3.2 and 4.4.6].

\begin{theorem}\label{Thmhoffman}
Let $\Gamma = (V,E)$ be a connected, non-complete edge-regular graph. Then the following statements are equivalent:
\begin{enumerate}
\item[$(1)$] $\Gamma$ is a strongly regular Neumaier graph;
\item[$(2)$] there exists a clique $C$ in $\Gamma$ such that $|C|$ attains the Hoffman coclique bound in the complementary graph $\overline{\Gamma}$.
\end{enumerate}
\end{theorem}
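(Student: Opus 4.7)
The plan is to show that $|C|$ attains the Hoffman coclique bound in $\overline{\Gamma}$ if and only if $C$ is a regular clique in $\Gamma$, turning the eigenvalue condition into the combinatorial regularity condition defining a Neumaier graph, and then to invoke Theorem~\ref{Neumaier4mainthm}(b) to extract strong regularity.

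For the direction $(1)\Rightarrow(2)$, start with a regular clique $C$ in the strongly regular Neumaier graph $\Gamma$. Every vertex outside $C$ has exactly $e$ $\Gamma$-neighbours in $C$, hence exactly $|C|-e$ $\overline{\Gamma}$-neighbours in $C$. This is precisely the equitable condition in the equality clause of Hoffman's ratio bound applied to the coclique $C$ of $\overline{\Gamma}$, so $|C|$ attains that bound.

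For the direction $(2)\Rightarrow(1)$, suppose $|C|$ attains the Hoffman coclique bound in $\overline{\Gamma}$. The equality clause of Hoffman's bound says every vertex outside $C$ has the same number of $\overline{\Gamma}$-neighbours in $C$, hence the same number of $\Gamma$-neighbours in $C$, so $C$ is a regular clique in $\Gamma$ and $\Gamma$ is a Neumaier graph. If $\Gamma$ is complete multipartite it is automatically strongly regular, so we may assume it is not, and then Theorem~\ref{Neumaier4mainthm} applies. By Proposition~\ref{prop2} we have $|C|=s+1$. Using the translation of eigenvalues under complementation (so the smallest eigenvalue of $\overline{\Gamma}$ equals $-1-\theta_{Max2}$), the Hoffman bound rewrites as $f(\theta_{Max2})$, where $f(\theta):=v(1+\theta)/(v-k+\theta)$ is strictly increasing on $(-1,\infty)$ because $f'(\theta)=v(v-k-1)/(v-k+\theta)^{2}>0$ by Lemma~\ref{lem3}.

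The key computational step is to verify that $f(\theta_M)=s+1$: applying Lemma~\ref{lem8bis} to substitute $\theta_M=s-e$ and then unwinding the defining identity $e(v-s-1)=(s+1)(k-s)$ yields this after a short algebraic manipulation. Combined with the inequality $\theta_{Max2}\geq\theta_M$ from Theorem~\ref{Neumaier4mainthm}(b), one obtains $f(\theta_{Max2})\geq f(\theta_M)=s+1=|C|=f(\theta_{Max2})$, so $f(\theta_{Max2})=f(\theta_M)$, and the strict monotonicity of $f$ forces $\theta_{Max2}=\theta_M$. The equality clause of Theorem~\ref{Neumaier4mainthm}(b) then delivers the conclusion that $\Gamma$ is strongly regular. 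The main obstacle is the bookkeeping surrounding the identity $f(\theta_M)=s+1$; once this is pinned down, the chain of implications is direct.
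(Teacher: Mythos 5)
Your $(2)\Rightarrow(1)$ direction is essentially sound and is only a mild repackaging of the paper's argument: where you show $f(\theta_M)=s+1$ for the monotone function $f(\theta)=v(1+\theta)/(v-k+\theta)$ and conclude $\theta_{Max2}=\theta_M$ from $f(\theta_{Max2})=f(\theta_M)$, the paper substitutes $\theta_{Max2}\geq s-e$ directly into the bound and obtains $s+1\geq s+1$, forcing $\theta_{Max2}=s-e=\theta_M$; both then finish with the equality clause of Theorem~\ref{Neumaier4mainthm}(b) (via Lemma~\ref{lem8bis}). One small point you should make explicit there: to call $C$ a regular clique you need the constant number $e$ of $\Gamma$-neighbours in $C$ to be \emph{positive}; this follows because $\Gamma$ is connected and non-complete.

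The genuine gap is in $(1)\Rightarrow(2)$. You argue: every vertex outside $C$ has a constant number of $\overline{\Gamma}$-neighbours in $C$, ``which is precisely the equitable condition in the equality clause of Hoffman's ratio bound, so $|C|$ attains the bound.'' That implication is false: equality in the ratio bound forces the constant to be exactly $-\theta_{Cmin}$ (equivalently, it forces the second quotient eigenvalue $s-e$ to coincide with $\theta_{Max2}$); a merely equitable partition $\{C,V\setminus C\}$ does not give equality (e.g.\ a $2$-coclique in the $6$-cycle is equitable but does not meet the bound). Note that your argument nowhere uses that $\Gamma$ is strongly regular, so if it were valid it would establish $(2)$ for every Neumaier graph --- in particular for the strictly Neumaier graphs of Greaves--Koolen --- and then your own $(2)\Rightarrow(1)$ would prove that all Neumaier graphs are strongly regular, which is false. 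The repair is exactly where strong regularity enters: for a strongly regular Neumaier graph the eigenvalues are $k$, $s-e$ and $-k/s$ (Lemmas~\ref{lem7bis} and~\ref{lem8bis}, and by direct check in the complete multipartite case), so $\theta_{Max2}=\theta_M=s-e$ and hence $-\theta_{Cmin}=s-e+1$; the Hoffman coclique bound in $\overline{\Gamma}$ is then $f(\theta_{Max2})=f(\theta_M)=s+1$ by your own computation, and any regular clique (which exists and has order $s+1$ by Proposition~\ref{prop2}) attains it.
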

\begin{proof}
Let $v$, $k$ and $\lambda$ be the parameters of $\Gamma$ as an edge-regular graph, and define $s$ as in the beginning of Section \ref{Sec:Neumaier1file}. 

\medskip \noindent $(2) \Rightarrow (1).$ In case $\Gamma$ is a complete multipartite graph, it must be a strongly regular Neumaier graph, see e.g. Section \ref{Sec:Neumaier1file}. So, for this part of the proof, we may assume that $\Gamma$ is not a complete multipartite graph. Suppose there exists a clique $C$ in $\Gamma$ such that $|C|$ attains the Hoffman coclique bound in $\bar{\Gamma}$. Then
\[  |C| = \frac{v}{1 + \frac{v-k-1}{-\theta_{Cmin}}} = \frac{v}{1 + \frac{v-k-1}{\theta_{Max2}+1}},   \]
where $\theta_{Cmin}$ is the smallest eigenvalue of $\overline{\Gamma}$. Note that $-\theta_{Cmin}=\theta_{Max2}+1$, where $\theta_{Max2}$ is the second largest eigenvalue of $\Gamma$, see e.g. Theorem 2.6 of \cite{cds}. We also know that every vertex of $V \setminus C$ is $\overline{\Gamma}$-adjacent to a constant number of vertices of $C$, or equivalently, $\Gamma$-adjacent to a constant number $e$ of vertices of $C$. As $\Gamma$ is connected and non-complete, $e > 0$ and so $C$ is a regular clique. By Proposition \ref{prop2}, we then know that $|C|=s+1$ and $e=\frac{(s+1)(k-s)}{v-(s+1)}$.  The quotient matrix $\begin{bmatrix} s & k-s \\ e & k-e\end{bmatrix}$ of the equitable partition $\{ C,V \setminus C \}$ has then $k$ and $s-e$ as eigenvalues. So, $k$ and $s-e$ are also eigenvalues of $\Gamma$ and $\theta_{Max2} \geq s-e$. As $s-e+1 = \frac{(s+1)(v-(k+1))}{v-(s+1)}$, we have
\[ s+1 = |C| = \frac{v}{1+\frac{v-k-1}{\theta_{Max2}+1}} \geq \frac{v}{1+\frac{v-k-1}{s-e+1}} = \frac{v}{1+\frac{v-(s+1)}{s+1}} = \frac{v(s+1)}{s+1+v-(s+1)}=s+1, \]
implying that $\theta_{Max2}=s-e$. So, the second largest eigenvalue of $\Gamma$ is $s-e$, implying by Theorem \ref{Neumaier4mainthm}(2) and Lemma \ref{lem8bis} that $\Gamma$ is strongly regular. As $\Gamma$ has a regular clique, it is a Neumaier graph.

\medskip \noindent $(1) \Rightarrow (2).$ Suppose $\Gamma$ is a strongly regular Neumaier graph. If $\Gamma$ is not a complete multipartite graph, then by Lemmas \ref{lem7bis} and \ref{lem8bis}, the eigenvalues of $\Gamma$ are $k$, $s-e$ and $-\frac{k}{s}$. In fact, by direct verification one can see that this remains true for complete multipartite graphs. So, $\theta_{Max2}=s-e$. The upper bound in the Hoffman coclique bound is then equal to
\[  \frac{v}{1+\frac{v-k-1}{\theta_{Max2}+1}}=\frac{v}{1+\frac{v-k-1}{s-e+1}}=\frac{v}{1+\frac{v-(s+1)}{s+1}}=\frac{v(s+1)}{s+1+v-(s+1)}=s+1.   \]
Every regular clique in $\Gamma$ has order $s+1$, and so there exists a coclique in $\overline{\Gamma}$ attaining the Hoffman coclique bound.
\end{proof}

\begin{theorem}\label{thmsrgeigenvalue}
The following are equivalent for a Neumaier graph $\Gamma$.
\begin{enumerate}
\item[$(1)$] $\Gamma$ is strongly regular.
\item[$(2)$] The Delsarte clique bound holds for $\Gamma$, i.e. $|C| \leq 1 - \frac{k}{\theta_{min}}$ for every clique $C$, with $k$ the valency of $\Gamma$ and $\theta_{min}$ the smallest eigenvalue of $\Gamma$.
\end{enumerate}
\end{theorem}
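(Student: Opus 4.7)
\medskip \noindent \textbf{Proof plan.}  The plan is to dispatch $(1) \Rightarrow (2)$ as an immediate invocation of the Delsarte clique bound (\ref{Delsarte1}) recalled at the beginning of this subsection, applied to the strongly regular graph $\Gamma$; this direction is the classical fact and requires nothing new.

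The substantive direction is $(2) \Rightarrow (1)$, where the strategy is to test the hypothesised bound on one well-chosen clique, namely the regular clique furnished by the Neumaier hypothesis. If $\Gamma$ is a complete multipartite graph then it is already strongly regular, so I may assume it is not, placing me in the setting of the two preceding subsections. By Proposition \ref{prop2}, the regular clique $C$ of $\Gamma$ has order $|C|=s+1$. Substituting into $|C|\leq 1-k/\theta_{\min}$ and using $\theta_{\min}<0$ (which holds for any graph containing an edge, by interlacing with a $K_2$), I obtain
\[ \theta_{\min} \;\geq\; -\frac{k}{s} \;=\; \theta_m. \]

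On the other hand, $\Gamma$ is regular of valency $k$ (being edge-regular) and connected (any vertex outside $C$ has $e\geq 1$ neighbours in $C$, so lies in the same component as $C$), so Corollary \ref{co11bis} supplies the complementary inequality $\theta_{\min}\leq\theta_m$. Combining the two forces $\theta_{\min}=\theta_m$, and the equality clause of Corollary \ref{co11bis} then yields that $\Gamma$ is strongly regular.

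No real obstacle is anticipated: the argument reduces to the observation that saturating the Delsarte bound on the $(s+1)$-clique coming from the Neumaier structure saturates exactly the spectral inequality $\theta_{\min}\geq\theta_m$ of the preceding subsection, whose equality case was already characterised as strong regularity.
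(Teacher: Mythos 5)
Your proposal is correct and follows essentially the same route as the paper: for $(2)\Rightarrow(1)$ both arguments evaluate the assumed bound on the regular $(s+1)$-clique to get $\theta_{\min}\geq -k/s=\theta_m$ and then invoke the equality case of Corollary \ref{co11bis}. The only cosmetic difference is in $(1)\Rightarrow(2)$, where you cite the classical Delsarte bound for strongly regular graphs directly (which the paper recalls at the start of the subsection), while the paper verifies it by computing the eigenvalues $k$, $s-e$, $-k/s$ and using $|C|\leq s+1$; your explicit separate treatment of the complete multipartite case is a harmless (indeed slightly more careful) addition.
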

\begin{proof}
Let $v$, $k$ and $\lambda$ be the parameters of $\Gamma$ as an edge-regular graph, and define $s$ and $e$ as in the beginning of Section \ref{Sec:Neumaier1file}. If $C$ is a clique of $\Gamma$, then by Propositions \ref{prop2} and \ref{prop3} we know that $|C| \leq s+1$, with equality if and only if $C$ is a regular clique.

\medskip \noindent $(1) \Rightarrow (2).$ Suppose $\Gamma$ is strongly regular. Similarly as in Theorem \ref{Thmhoffman}, we then know that $k$, $s-e$ and $-\frac{k}{s}$ are the eigenvalues of $\Gamma$. So, $1-\frac{k}{\theta_{min}}=s+1$. As $|C| \leq s+1$ for every clique $C$, we indeed see that the Delsarte clique bound holds in $\Gamma$.

\medskip \noindent $(2) \Rightarrow (1).$ Suppose the Delsarte clique bound holds in $\Gamma$. If $C$ is a regular clique in $\Gamma$, then $s+1 = |C| \leq 1 - \frac{k}{\theta_{min}}$, implying that $\theta_{min} \geq -\frac{k}{s}$. Then by Corollary \ref{co11bis} it follows that $\Gamma$ is strongly regular.
\end{proof}

\subsection{A characterization using $t$-walk regularity}

Recall that a \emph{$t$-walk-regular graph} is a graph $\Gamma$ for which the number of walks of a given length between two vertices depends only on the distance between these two vertices, as long as this distance is at most $t$. Such graphs generalize distance-regular graphs and $t$-arc-transitive graphs. Every $t$-walk-regular graph is also regular.

Let $\Gamma$ be a connected regular graph with distinct eigenvalues $k= \theta_0,\theta_1,\ldots,\theta_d$ and adjacency matrix $A$. Let $R_i := \frac{\prod_{j: j \neq i} (A-\theta_j I)}{\prod_{j:j \neq i} \theta_i-\theta_j}$ with $i \in \{ 0,1,\ldots,d \}$ be the \emph{primitive idempotent} of $\Gamma$ corresponding to the eigenvalue $\theta_i$. We have $AR_i = \theta_i R_i$. Let $A_i$ be the $i$-adjacency matrix, i.e. $(A_i)_{xy} = 1$ if $d(x, y)=i$ and 0 otherwise.

By Theorem 3.1 of \cite{cfg}, we know that $\Gamma$ is $t$-walk-regular if and only if for every $i \in \{ 0,1,\ldots,t \}$ and every $j \in \{ 0,1,\ldots,d \}$, there exists an $\alpha_{ij} \in \R$ such that $A_i\circ R_j = \alpha_{ij} A_i$, where $\circ$ denotes the entrywise product. From now on we assume that $\Gamma$ is $1$-walk-regular. For every $i \in \{ 0,1,\ldots,d \}$, we define $\epsilon_i := (R_i)_{xx}$ where $x$ is any vertex of $\Gamma$, and put $\tilde{R_i} := \frac{1}{\epsilon_i} R_i$. It is easy to see that $(\tilde{R_i})_{xy} = \theta_i/k$ for adjacent vertices $x$ and $y$.

Neumaier [\cite{Neumaier1981}, Corollary 2.4] showed that a vertex- and edge-transitive graph that has a regular clique must be strongly regular. Since vertex- and edge-regularity implies 1-walk-regularity, the next theorem provides an analogous of Neumaier's result, but it requires a weaker assumption.

\begin{theorem}\label{thm:Hefeidiscussions1}
Let $\Gamma = (V,E)$ be a $1$-walk-regular graph with a regular clique. Then $\Gamma$ is a strongly regular graph.
\end{theorem}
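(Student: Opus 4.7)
The plan is to apply a Delsarte-style clique bound through the primitive idempotents $R_j$, and then combine the resulting inequality with Corollary \ref{co11bis} from the previous section to force strong regularity.

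First I would observe that $1$-walk-regularity already forces edge-regularity. Since $A^2 = \sum_j \theta_j^2 R_j$, the entry $(A^2)_{xy}$ for an adjacent pair $x\sim y$ equals $\sum_j \theta_j^2 (R_j)_{xy}$, and each $(R_j)_{xy}$ is the constant $\alpha_{1j}$ by hypothesis; hence the number of common neighbours of two adjacent vertices is a constant. Together with the regular clique, $\Gamma$ is therefore a Neumaier graph and the parameters $s,e$ defined in Section \ref{Sec:Neumaier1file} apply to it. If $\Gamma$ happens to be a complete multipartite graph it is already strongly regular and we are done, so I would henceforth assume it is not; Proposition \ref{prop2} then gives that the regular clique $C$ has size exactly $s+1$.

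The core step is to evaluate $\chi_C^{T} R_j \chi_C$ for each $j$, where $\chi_C$ is the characteristic vector of $C$. Using $1$-walk-regularity, every diagonal entry of $R_j$ equals the same constant $\epsilon_j = (R_j)_{xx} = m_j/v > 0$ (where $m_j$ is the multiplicity of $\theta_j$), and every entry $(R_j)_{xy}$ with $x\sim y$ equals $\epsilon_j \theta_j / k$ by the identity $(\tilde R_j)_{xy} = \theta_j/k$ recalled just before the theorem. Summing the $s+1$ diagonal contributions and the $(s+1)s$ contributions from pairs of distinct (hence adjacent) vertices in $C$ yields
$$\chi_C^{T} R_j \chi_C \;=\; (s+1)\,\epsilon_j \cdot \frac{k+s\theta_j}{k}.$$
Because $R_j$ is an orthogonal projector, it is positive semi-definite, so this quantity is non-negative; as $\epsilon_j>0$ and $s,k>0$, this forces $k+s\theta_j\geq 0$, that is, $\theta_j \geq -k/s$, for every $j$. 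In particular $\theta_{\min} \geq -k/s = \theta_m$.

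To conclude, I would invoke Corollary \ref{co11bis}, which says that for any regular graph $\theta_{\min}\leq \theta_m$ with equality if and only if the graph is strongly regular. Combined with the inequality just obtained this forces $\theta_{\min} = \theta_m$, whence $\Gamma$ is strongly regular. The only point of real substance is noticing that $1$-walk-regularity was tailored precisely to give the constancy of $(R_j)_{xy}$ on edges needed to run the Delsarte/Hoffman positive semi-definiteness argument on $R_j$ against the regular clique $C$; the remainder is a short calculation followed by an appeal to results already established in the excerpt.
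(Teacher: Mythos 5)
Your proof is correct, but it takes a genuinely different route from the paper's. The paper argues directly with the clique's equitable partition: the quotient matrix gives the eigenvalue $s-e$, the characteristic vector $\chi_C$ lies in the span of the all-ones vector and an eigenvector for $s-e$, and then for any further eigenvalue $\theta_i$ the columns of $\tilde R_i$ are orthogonal to $\chi_C$; summing such a column over $C$ and using the $1$-walk-regular constancy of $(\tilde R_i)_{xx}=1$ and $(\tilde R_i)_{xy}=\theta_i/k$ forces $1+s\theta_i/k=0$, so there is at most one eigenvalue besides $k$ and $s-e$, namely $-k/s$, and a connected regular graph with at most three eigenvalues is strongly regular. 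You instead first extract edge-regularity from $A^2=\sum_j\theta_j^2R_j$ (so $\Gamma$ is a Neumaier graph and Proposition \ref{prop2} pins $|C|=s+1$ in the non-multipartite case), and then run the Delsarte/ratio-bound argument: positive semidefiniteness of each idempotent against $\chi_C$, together with the same two constancy facts, yields $\theta_j\ge -k/s$ for all $j$, i.e.\ the Delsarte clique bound $s+1\le 1-k/\theta_{\min}$, after which Corollary \ref{co11bis} (equivalently, Theorem \ref{thmsrgeigenvalue}, whose proof your endgame mirrors) forces $\theta_{\min}=\theta_m$ and strong regularity. The trade-off: your route isolates a reusable fact --- $1$-walk-regular graphs satisfy the Delsarte clique bound --- and exhibits the theorem as a corollary of Theorem \ref{thmsrgeigenvalue}, but it leans on the heavier machinery behind Corollary \ref{co11bis} and on Proposition \ref{prop2}, and it needs the (easy, but worth stating) identifications $\bar k=k$, $\bar\lambda=\lambda$, hence $\bar s=s$ and $\theta_m=-k/s$, to make that appeal legitimate; the paper's argument is self-contained linear algebra, works without first invoking edge-regularity or the parameter $e$ beyond the quotient matrix, and in addition determines the whole spectrum $\{k,\,s-e,\,-k/s\}$ explicitly. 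Both treatments share the same mild gloss over the degenerate complete (multipartite) cases, so that is not a defect of yours relative to the paper.
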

\begin{proof}
Let $k$ be the valency of $\Gamma$. Let $C$ be a regular clique of size $s+1$ and suppose every vertex outside $C$ has $e$ neighbours in $C$. As $e \geq 1$, the graph $\Gamma$ is connected. Consider the partition ${\cal{P}}=\{C,V\setminus C\}$, which has quotient matrix $B=\begin{bmatrix}
s & k-s \\
e & k-e
\end{bmatrix}$.
The eigenvalues $k$ and $s-e < k$ of $B$ are also eigenvalues of the adjacency matrix $A$ of $\Gamma$. If $[v_1 \, v_2]^T$ is an eigenvector of $B$ corresponding to $s-e$, then $v_1 \not= v_2$ and the column matrix $\omega$ of size $|V|$ with $\omega_x = v_1$ if $x \in C$ and $\omega_x=v_2$ otherwise is an eigenvector of $A$ corresponding to the eigenvector $s-e$. Note that $j = [1\, 1 \, \cdots \, 1]^T$ is an eigenvector of $A$ corresponding to $k$. The characteristic vector $\chi_C$ of $C$ is a linear combination of $\omega$ and $j$. 

Assume now that $\theta_i \not\in \{ k,s-e \}$ is another eigenvalue, with idempotent $R_i$, and consider the matrix $\tilde{R_i}$ as defined above. Any column of $\tilde{R_i}$ is orthogonal to $\omega$ and $j$ and hence also to $\chi_C$. Considering a column of $\tilde{R_i}$ corresponding to a vertex of $C$, we then find that $1+su=0$, where $u=\frac{\theta_i}{k}$. The eigenvalue $\theta_i$ of $\Gamma$ is thus uniquely determined. It follows that $\Gamma$ has at most three distinct eigenvalues. As $\Gamma$ is also connected and regular, it must be a strongly regular graph.
\end{proof}

\medskip Theorem \ref{thm:Hefeidiscussions1} has the following immediate corollary.

\begin{corollary}
It is not possible to construct a Neumaier graph as a relation graph of a symmetric association scheme that does not come from a strongly regular graph.
\end{corollary}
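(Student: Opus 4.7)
The plan is to derive this as an immediate consequence of Theorem \ref{thm:Hefeidiscussions1}. The crucial observation is that every relation graph of a symmetric association scheme is automatically $1$-walk-regular, and once this is in hand the corollary is a one-line invocation of the preceding theorem.

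To verify the $1$-walk-regularity, I would let $\{A_0 = I, A_1, \ldots, A_d\}$ be the adjacency matrices of the relations $R_0, R_1, \ldots, R_d$ of the scheme and let $\Gamma = \Gamma_i$ be the relation graph under consideration, with adjacency matrix $A := A_i$. Since the Bose--Mesner algebra is closed under multiplication, every power $A^{\ell}$ is an $\R$-linear combination of $A_0, \ldots, A_d$. Consequently $(A^{\ell})_{xy}$ depends only on the relation $R_j$ containing the pair $(x,y)$. In particular, when $x=y$ the entry $(A^{\ell})_{xx}$ is independent of $x$ since $(x,x)$ always lies in $R_0$, and when $x \sim y$ in $\Gamma$ the entry $(A^{\ell})_{xy}$ is independent of the chosen edge since $(x,y)$ always lies in $R_i$. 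This is precisely the condition that the number of walks of any fixed length between two vertices depends only on whether the vertices are equal or adjacent, i.e.\ the definition of $1$-walk-regularity.

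With this in place, the corollary follows at once: any Neumaier graph contains a regular clique by definition, so a Neumaier graph that arises as a relation graph of a symmetric association scheme is simultaneously $1$-walk-regular and equipped with a regular clique. Theorem \ref{thm:Hefeidiscussions1} then forces such a graph to be strongly regular, so it already \emph{does} come from a strongly regular graph (namely itself), contradicting the hypothesis of the corollary.

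There is no serious obstacle here; the entire argument reduces to observing the elementary fact about powers in the Bose--Mesner algebra. The only mild point to watch is that the paper's notion of $1$-walk-regularity only requires constancy of walk counts between vertices at distance $0$ or $1$, and both cases are handled by the two single relations $R_0$ and $R_i$, so we do not need to invoke the stronger distance-regular-like behaviour that the scheme in fact provides.
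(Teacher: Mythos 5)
Your proposal is correct and matches the paper's (implicit) argument: the paper states the corollary as an immediate consequence of Theorem \ref{thm:Hefeidiscussions1}, the point being exactly that a relation graph of a symmetric association scheme is $1$-walk-regular, which you verify via the Bose--Mesner algebra. Spelling out that closure under multiplication makes $(A^{\ell})_{xy}$ depend only on the relation containing $(x,y)$, and hence constant on the diagonal and on edges, is precisely the intended justification, so nothing further is needed.
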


\medskip It is worth noting that there are vertex-transitive Neumaier graphs that are not strongly regular graphs, see for instance the construction provided by Greaves and Koolen \cite{GK2018} which uses Cayley graphs. Note that vertex-transitivity implies 0-walk-regular.

It is also important to note that a vertex-transitive Neumaier graph
has diameter 2, and it is not known whether this is still true under the weaker condition
 of walk-regularity. Examples of Neumaier graphs with diameter $3$ are known \cite{Sergeyprivate}.

\subsection{A characterization using the smallest eigenvalue being $-2$}\label{smallestev-2}

In [Theorem 3.12.4, \cite{bcn}] it is shown that if a connected graph is edge-regular with smallest eigenvalue $-2$, then it is strongly regular or the line graph of a triangle-free regular graph.
	
\begin{proposition}\label{smallesteigenvalue-2}
Suppose the line graph $L(\Gamma)$ of a graph $\Gamma$ is a Neumaier graph. Then $L(\Gamma) $ is one of the following:
\begin{enumerate}
\item[$(1)$] an $(s+1) \times (s+1)$ rook graph for some $s \in \N \setminus \{ 0 \}$;
\item[$(2)$] a Johnson graph $J(s+2,2)$ for some $s \in \N \setminus \{ 0,1 \}$;
\item[$(3)$] the octahedral graph.
\end{enumerate}
In particular, $L(\Gamma)$ is a strongly regular graph.
\end{proposition}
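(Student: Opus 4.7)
My plan is to first observe that $L(\Gamma)$, being a line graph, has smallest eigenvalue at least $-2$, and since it is non-complete edge-regular and contains a regular clique (hence is connected and non-degenerate), its smallest eigenvalue is exactly $-2$. I would then apply the theorem cited immediately before the proposition (Theorem 3.12.4 of \cite{bcn}): $L(\Gamma)$ is either strongly regular, or the line graph of a triangle-free regular graph. A preliminary remark is that the edge-regularity of $L(\Gamma)$ forces $\Gamma$ itself to be regular: if $\Gamma$ were only semi-regular bipartite with unequal side degrees $a\neq b$, then two adjacent edges sharing a vertex on the $A$-side would have $a-2$ common neighbors in $L(\Gamma)$, while adjacent edges sharing a $B$-side vertex would have $b-2$, contradicting edge-regularity.

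\medskip\noindent\textbf{Triangle-free case.} Suppose $\Gamma$ is triangle-free and $d$-regular. The maximal cliques of $L(\Gamma)$ are then precisely the star-cliques $S_v$ (of size $d$) at each vertex $v$ of $\Gamma$, since triangle-freeness removes the ``triangle-cliques''. A regular clique of $L(\Gamma)$ must therefore be some star $S_v$, and the regularity condition translates to: for every edge $\{a,b\}$ of $\Gamma$ with $v\notin\{a,b\}$, the number $|\{a,b\}\cap N_\Gamma(v)|$ is a constant $e\geq 1$. Since $N_\Gamma(v)$ is independent (triangle-freeness), this intersection lies in $\{0,1\}$, forcing $e=1$. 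Consequently $V(\Gamma)\setminus(\{v\}\cup N_\Gamma(v))$ is independent, which combined with the independence of $N_\Gamma(v)$ and $v\notin N_\Gamma(v)$ exhibits $\Gamma$ as bipartite with parts $\{v\}\cup\Gamma_{\geq 2}(v)$ and $N_\Gamma(v)$. By [Theorem 1.1, \cite{Neumaier1981}] this analysis applies to \emph{every} vertex $v$, so a short symmetry argument (two vertices in the same part of a connected bipartite graph have the same neighborhood) combined with $d$-regularity forces $\Gamma=K_{d,d}$, and $L(\Gamma)$ is the $d\times d$ rook graph — case (1) with $s+1=d$.

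\medskip\noindent\textbf{Strongly regular case.} If $L(\Gamma)$ is strongly regular, I would invoke the classification of strongly regular graphs that are line graphs (a consequence of Seidel's classification of strongly regular graphs with smallest eigenvalue $-2$ together with Beineke's forbidden-subgraph characterization of line graphs): the only possibilities are $L(K_n)$ for $n\geq 3$ and $L(K_{n,n})$ for $n\geq 2$. The case $n=3$ of $L(K_n)$ gives the complete graph $K_3$, excluded. For $n=4$, $L(K_4)=K_{2,2,2}$ is the octahedral graph (case (3)); for $n\geq 5$, $L(K_n)=J(n,2)$ is the Johnson graph of case (2) with $s=n-2$. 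The second family $L(K_{n,n})=K_n\,\square\,K_n$ contributes the $n\times n$ rook graph again (case (1) with $s+1=n$), consistent with the triangle-free case.

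\medskip\noindent The main technical obstacle is the bipartition argument in the triangle-free case — turning the local condition ``every non-incident edge meets $N(v)$ in exactly one vertex'' into the global conclusion $\Gamma=K_{d,d}$ — which requires using that \emph{every} maximal clique is a regular clique (via Neumaier's theorem). The strongly regular case is essentially a citation once we know $L(\Gamma)$ has smallest eigenvalue $-2$; the only small verification needed is matching parameters to the three listed families and checking that all three are indeed Neumaier (which is immediate as each contains a transparent regular clique: a row/column in a rook graph, a star at a vertex in a Johnson graph, or a maximal independent set's complementary clique in the octahedron).
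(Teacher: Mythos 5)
Your route is genuinely different from the paper's. The paper never touches the spectrum: it notes that a regular clique of the Neumaier graph $L(\Gamma)$ is a maximal clique of a line graph, hence consists either of all edges of $\Gamma$ through one vertex or of the three edges of a triangle, and a direct case analysis (with nexus $e\in\{1,2\}$ in the star case) produces the rook graphs, $J(s+2,2)$ and the octahedron. You instead run a spectral/classification argument: least eigenvalue $-2$, the dichotomy of Theorem 3.12.4 in \cite{bcn}, a combinatorial treatment of the triangle-free branch, and Seidel's classification plus Beineke/Whitney in the strongly regular branch. This does work and makes the strongly regular branch short, at the price of invoking much heavier external machinery than the paper's self-contained argument. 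Your triangle-free branch is essentially the paper's $e=1$ subcase in disguise, and it can be shortened: once $\Gamma$ is connected, $d$-regular and bipartite with one part equal to $N_\Gamma(v)$ of size $d$, the two parts both have size $d$ and $\Gamma=K_{d,d}$ follows at once, so the appeal to regularity of all stars via Neumaier's Theorem 1.1 is not needed there.

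Three points should be tightened. First, the assertion that the least eigenvalue of $L(\Gamma)$ equals $-2$ is not proved: a line graph has least eigenvalue strictly greater than $-2$ exactly when the base graph is a tree or an odd-unicyclic graph, so you must exclude these (since $L(\Gamma)$ is connected, regular, non-complete and has a regular clique, the only surviving possibility is $\Gamma$ an odd cycle $C_n$ with $n\geq 5$, whose line graph $C_n$ has no regular clique). Without this step Theorem 3.12.4 of \cite{bcn} does not apply, and moreover $C_5=L(C_5)$ shows that ``strongly regular line graphs are only $L(K_n)$ and $L(K_{n,n})$'' is false without the eigenvalue $-2$ restriction, so this step is load-bearing for your citation of Seidel's classification. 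Second, that theorem gives $L(\Gamma)\cong L(H)$ for some triangle-free regular $H$; to argue with $\Gamma$ itself you need Whitney's theorem to conclude $\Gamma\cong H$ (the exceptional pair $K_3$, $K_{1,3}$ is harmless because their common line graph is complete). Third, you use that a regular clique is a maximal clique of $L(\Gamma)$, hence a full star; this should be justified, e.g.\ by Propositions \ref{prop2} and \ref{prop3}, which bound every clique by $s+1$ and give $e<s+1$. All of these are easily fillable, so the proposal is essentially correct, just considerably less elementary than the paper's proof.
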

\begin{proof}
As adding isolated vertices to a graph does not alter its line graph, we may without loss of generality assume that $\Gamma$ does not contain isolated vertices. As $L(\Gamma)$ is connected, the graph $\Gamma$ must therefore be connected as well. We denote by $k$ the valency of $L(\Gamma)$ and by $\lambda$ the constant number of triangles through a given edge of $L(\Gamma)$. We also assume that $C$ is a regular clique of size $s+1$ of $L(\Gamma)$ and that every vertex of $L(\Gamma)$ not in $C$ is adjacent to exactly $e > 0$ vertices of $C$. As $C$ is a maximal clique, it consists either of all the edges in $\Gamma$ containing a given vertex or of all three edges in a triangle of $\Gamma$. 
  
\medskip  Suppose first that $C$ consists of all the edges in $\Gamma$ through a vertex $v$. Every edge of $\Gamma$ not incident with $v$ is adjacent to at most two edges in $C$, implying that $e \in \{ 1,2 \}$.

Assume $e=1$. Then $\Gamma(v)$ cannot contain edges. If $v'$ is a vertex not belonging to $\{ v \} \cup \Gamma(v)$, then the condition $e=1$ implies that every edge through $v'$ is of the form $v'w$ with $w$ one of the $s+1$ vertices of $\Gamma(v)$. The degree $k$ of the edges $vw$ and $v'w$ in $L(\Gamma)$ is equal to $\deg(v)+\deg(w)-2=\deg(v')+\deg(w)-2$, implying that $\deg(v')=\deg(v)=s+1$. This implies that $\Gamma \cong K_{s+1,n}$ for some $n \in \N \setminus \{ 0,1 \}$. Edge-regularity in $L(\Gamma)$ implies that $n=s+1$ and that $L(\Gamma)$ is the $(s+1) \times (s+1)$ rook graph.
 
Assume $e=2$. Then all edges not incident with $v$ are contained in $\Gamma(v)$, and so $\{ v \} \cup \Gamma(v)$ is the whole vertex set of $\Gamma$. If $b=w_1w_2$ is an edge not incident with $v$, then the edge-regularity condition applied to respectively $\{ vw_1,vw_2 \}$ and $\{ vw_1,w_1w_2 \}$ gives $\lambda = \deg(v) - 2 + 1 = \deg(w_1)-2+1$, implying that $\deg(w_1)=\deg(v)=s+1$. Repeating this argument for other edges not incident with $v$, we see that all vertices of $\Gamma$ have degree $s+1$. So, $\Gamma \cong K_{s+2}$ and $L(\Gamma) \cong J(s+2,2)$. As $L(\Gamma)$ is not complete, we have $s \geq 2$.  
 
\medskip Suppose next that $C = \{ v_1v_2,v_1v_3,v_2v_3 \}$ where $\Delta = \{ v_1,v_2,v_3 \}$ is a triangle of $\Gamma$. The valency $k$ of $L(\Gamma)$ is equal to $\deg(v_1)+\deg(v_2)-2=\deg(v_1)+\deg(v_3)-2=\deg(v_2)+\deg(v_3)-2$, implying that $\deg(v_1)$, $\deg(v_2)$ and $\deg(v_3)$ are equal, say to $l$, and that $k=2l-2$. Since $\Gamma$ is connected and has edges outside $C$, we have $l \geq 3$. Note that the set of $l \geq 3$ edges through $v_1$ is a clique of $L(\Gamma)$. As the maximal size of a clique of $L(\Gamma)$ is equal to $|C|=3$, we have $l=3$ and $k=4$. Every vertex $v_i \in \{ v_1,v_2,v_3 \}$ is therefore incident with a unique edge $e_i$ not contained in $\Delta$. We see that $e_i$ and hence all vertices $b \not\in C$ are adjacent with precisely $e=2$ edges of $C$. So, $e_1$, $e_2$ and $e_3$ are the only edges not in $C$. As the degree of $e_1$ in $L(\Gamma)$ equals $k=4$, the edges $e_1$, $e_2$ and $e_3$ are mutually adjacent, implying that $\Gamma$ is the tetrahedral graph and $L(\Gamma)$ is the octahedral graph.
\end{proof}	
	
Proposition \ref{smallesteigenvalue-2} and [\cite{bcn}, Theorem 3.12.4] imply the following straightforward consequence.

\begin{corollary}
Every Neumaier graph with smallest eigenvalue $-2$ is strongly regular.
\end{corollary}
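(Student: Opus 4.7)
The plan is to combine the structural dichotomy from Theorem 3.12.4 of \cite{bcn} with Proposition \ref{smallesteigenvalue-2}. Let $\Gamma$ be a Neumaier graph with smallest eigenvalue $-2$. By definition, $\Gamma$ is edge-regular and non-complete, and contains a regular clique $C$. Since every vertex outside $C$ is adjacent to a positive number of vertices in $C$, the graph $\Gamma$ has diameter at most $2$ and is in particular connected, so the hypotheses of Theorem 3.12.4 of \cite{bcn} are satisfied.

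Applying that theorem, I would split into two cases. In the first case, $\Gamma$ is strongly regular and there is nothing left to prove. In the second case, $\Gamma$ is the line graph $L(H)$ of some triangle-free regular graph $H$; but then $\Gamma = L(H)$ is itself a Neumaier graph that happens to be a line graph, so Proposition \ref{smallesteigenvalue-2} applies and forces $\Gamma$ to be one of the three explicit families listed there, each of which is strongly regular. Thus in either case $\Gamma$ is strongly regular.

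There is essentially no obstacle: the whole content of the proof has been pushed into the two cited results. The only point that deserves a sentence of justification is the connectedness of $\Gamma$, which is needed to invoke Theorem 3.12.4 of \cite{bcn} but follows immediately from the existence of the regular clique. One could alternatively remark that in the line-graph case supplied by BCN, the underlying graph $H$ is triangle-free and regular, so the only family from Proposition \ref{smallesteigenvalue-2} that can arise is the $(s+1)\times(s+1)$ rook graph $L(K_{s+1,s+1})$; but for the statement of the corollary this refinement is unnecessary.
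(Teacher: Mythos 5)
Your proposal is correct and follows exactly the route the paper intends: the paper presents this corollary as a ``straightforward consequence'' of Theorem 3.12.4 of \cite{bcn} together with Proposition \ref{smallesteigenvalue-2}, which is precisely your case split (strongly regular, or a line graph, in which case the proposition forces one of the three strongly regular families). One small caution: your side claim that $\Gamma$ has diameter at most $2$ is not justified and is in fact false for general Neumaier graphs (the paper itself notes that examples of diameter $3$ exist); however, this is harmless, since all you need to invoke Theorem 3.12.4 of \cite{bcn} is connectedness, and that does follow from the existence of the regular clique, as every vertex outside it has a neighbour inside it.
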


\section{Neumaier graphs with four eigenvalues}\label{Neumaier4distinctev}

Regular graphs with precisely four eigenvalues have already been studied in the literature, see e.g. \cite{v1995}. In this section, we prove that no such graph can be a Neumaier graph.

\begin{theorem}
There are no Neumaier graphs with exactly four distinct eigenvalues.
\end{theorem}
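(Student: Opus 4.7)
The plan is to argue by contradiction and reduce to Theorem \ref{thm:Hefeidiscussions1}: I would show that a Neumaier graph with exactly four distinct eigenvalues must be $1$-walk-regular, so the regular clique forces strong regularity, which contradicts the four-eigenvalue assumption. The key insight is that edge-regularity combined with a degree-$4$ minimal polynomial already produces the entrywise regularity required for $1$-walk-regularity.

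Concretely, suppose $\Gamma$ is a Neumaier graph with adjacency matrix $A$ and distinct eigenvalues $k = \theta_0 > \theta_1 > \theta_2 > \theta_3$. Because every vertex outside the regular $(s+1)$-clique $C$ has $e \geq 1$ neighbours in $C$, the graph $\Gamma$ is connected and the eigenvalue $k$ is simple. Since $p(X) := (X-\theta_1)(X-\theta_2)(X-\theta_3)$ annihilates the $\theta_1$-, $\theta_2$- and $\theta_3$-eigenspaces and acts as $(k-\theta_1)(k-\theta_2)(k-\theta_3) \neq 0$ on $\mathrm{span}(j)$, the symmetric rank-$1$ matrix $p(A)$ must be
\[
p(A) = \frac{(k-\theta_1)(k-\theta_2)(k-\theta_3)}{v}\, J.
\]
Expanding, this expresses $A^3$ as an explicit linear combination of $I$, $A$, $A^2$ and $J$.

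Next I would inspect this linear combination entrywise: for any $x,y$, the values $I_{xy}$, $A_{xy}$ and $J_{xy}$ depend only on whether $x = y$ or $x \sim y$, while edge-regularity yields $(A^2)_{xx} = k$ and $(A^2)_{xy} = \lambda$ for every edge $xy$. Consequently $(A^3)_{xy}$ takes one value on the diagonal and one value on the edges. Because the minimal polynomial has degree $4$, the algebra generated by $A$ equals $\mathrm{span}\{I,A,A^2,A^3\} = \mathrm{span}\{I,A,A^2,J\}$, so the same analysis shows $(A^i)_{xy}$ is constant on the diagonal and constant on the edges for every $i \geq 0$. This is precisely the definition of $1$-walk-regularity given in the paper, and Theorem \ref{thm:Hefeidiscussions1} therefore forces $\Gamma$ to be strongly regular, contradicting the assumption of four distinct eigenvalues.

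The main conceptual step is noticing that the identity expressing $p(A)$ as a scalar multiple of $J$ upgrades edge-regularity to $1$-walk-regularity once one knows $\Gamma$ has exactly four distinct eigenvalues; no further combinatorial information about the Neumaier structure is required beyond the fact that it supplies a regular clique for the final invocation of Theorem \ref{thm:Hefeidiscussions1}.
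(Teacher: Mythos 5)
Your argument is correct, but it takes a genuinely different route from the paper's. You use the same starting point, namely the Hoffman-type rank-one identity $(A-\theta_1I)(A-\theta_2I)(A-\theta_3I)=\frac{(k-\theta_1)(k-\theta_2)(k-\theta_3)}{v}J$ (valid since the regular clique forces connectedness, so $k$ is a simple eigenvalue), but you then exploit it structurally: every power $A^i$ lies in $\mathrm{span}\{I,A,A^2,J\}$, and regularity plus edge-regularity make the entries of $I$, $A$, $A^2$, $J$ constant on the diagonal and constant on edges, so $\Gamma$ is $1$-walk-regular in the sense of the paper's definition; Theorem \ref{thm:Hefeidiscussions1} (whose proof is independent of the four-eigenvalue result, so there is no circularity) then gives strong regularity, contradicting the presence of four distinct eigenvalues. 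The paper instead runs a parameter computation: it first pins down $s-e$ as the middle eigenvalue $\theta$ (using Theorem \ref{Neumaier4mainthm} with Lemma \ref{lem8bis}, and positivity of $s-e$), uses Theorem \ref{thmsrgeigenvalue} to get $\theta_2<-k/(\theta+e)$ so that a certain denominator is nonzero, and then extracts from the same rank-one identity, together with the counting relations for $v$ and $\lambda$, the explicit value $\theta_1=-k/(e+\theta)<0$, a contradiction. Your reduction is shorter and conceptually cleaner (in effect you observe that a connected edge-regular graph with four distinct eigenvalues is automatically $1$-walk-regular, which is of independent interest and bypasses Theorems \ref{Neumaier4mainthm} and \ref{thmsrgeigenvalue} entirely), at the cost of leaning on Theorem \ref{thm:Hefeidiscussions1} and on the cited equivalence between the walk-count and idempotent formulations of $t$-walk-regularity; the paper's computation is more self-contained within Section \ref{Neumaier4distinctev} and also yields explicit parameter constraints along the way.
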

\begin{proof}
Suppose to the contrary that $\Gamma$ is a Neumaier graph with exactly four eigenvalues $k$, $\theta_1$, $\theta$ and $\theta_2$, with $k$ the valency of $\Gamma$ and $k > \theta_1 > \theta > \theta_2$. Suppose $C$ is a regular clique of size $s+1$ and every vertex outside $C$ is adjacent to precisely $e > 0$ vertices of $C$. Above, we have already seen that $e = \frac{(s+1)(k-s)}{v-s-1}$ with $v$ the number of vertices of $\Gamma$, i.e.
\begin{equation}\label{countingtvertices}
v=\frac{(s+1)(k-s+e)}{e}.
\end{equation}
Counting triangles having exactly one of its edges in $C$, we find 
\begin{equation}\label{countingtriangles}
{s+1\choose 2}(\lambda-(s-1))=(v-s-1){e\choose 2},
\end{equation}
with $\lambda$ denoting the constant number of triangles containing a given edge.

The equitable partition $\{ C,V\setminus C \}$ has 
quotient matrix
$
\begin{bmatrix}
s & k-s \\
e & k-e
\end{bmatrix},
$
whose eigenvalues are $k$ and $s-e$. As these are also eigenvalues of $\Gamma$, we have $s-e \in \{ \theta,\theta_1,\theta_2 \}$. In fact, we can prove that $s-e=\theta$. As $C$ cannot be properly contained in another clique, we have $e \leq s$. So, $s-e \geq 0$ is distinct from the smallest eigenvalue $\theta_2$ of $\Gamma$ which is negative. As $\Gamma$ has exactly four eigenvalues, it is not a strongly regular graph, implying by Theorem \ref{Neumaier4mainthm} and Lemma \ref{lem8bis} that $\theta_1 > s-e$. Summarizing we thus have
\[  k > \theta_1 > \theta=s-e \geq 0 > \theta_2.  \]
As $s=\theta+e$, equations (\ref{countingtvertices}) and (\ref{countingtriangles}) become
\[  v=\frac{(\theta+e+1)(k-\theta)}{e},\qquad  \lambda = \theta+e-1+\frac{(k-\theta-e)(e-1)}{\theta+e}. \] 
As $|C|=s+1=\theta+e+1$ and $\Gamma$ is not strongly regular, Theorem \ref{thmsrgeigenvalue} implies that $\theta_2 < - \frac{k}{\theta+e}$, or equivalently
\[  k+e\theta_2+\theta_2 \theta < 0.  \]
The eigenvalues of the adjacency matrix $A$ of $\Gamma$ distinct from $k$ are $\theta$, $\theta_1$ and $\theta_2$. As $\Gamma$ is connected and regular with valency $k$, we thus have
\begin{equation}\label{eq5}
(A-\theta I)(A-\theta_1 I)(A-\theta_2 I)=\alpha J
\end{equation}
for some $\alpha \in \R$, where $J$ denotes the $v \times v$ matrix with all entries equal to 1. Equation (\ref{eq5}) can be rewritten as 
\begin{equation}\label{eq6}
A^3+\beta_2A^2+\beta_1 A +\beta_0 I =\alpha J,
\end{equation}
where $\beta_2=-(\theta+\theta_1+\theta_2)$, $\beta_1 = \theta \theta_1 + \theta \theta_2 + \theta_1 \theta_2$ and $\beta_0=-\theta\theta_1\theta_2$. If we calculate the diagonal entries of the matrices occurring at both sides of (\ref{eq6}), we find that 
\[ \alpha = k\lambda+\beta_2k+\beta_0=k\lambda+(-\theta-\theta_1-\theta_2)k-\theta\theta_1\theta_2, \]
as there are $k$ and $k \lambda$ closed walks of respective lengths 2 and 3 starting and ending in a given vertex of $\Gamma$. On the other hand, after multiplying both sides of (\ref{eq5}) by $j = [1 \, 1\, \cdots \, 1]^T$, we find
\[  (k-\theta)(k-\theta_1)(k-\theta_2) = \alpha v = (k\lambda+(-\theta-\theta_1-\theta_2)k-\theta\theta_1\theta_2) \frac{(\theta+e+1)(k-\theta)}{e},   \]
i.e. 
$$(k-\theta_1)(k-\theta_2)=(k\lambda+(-\theta-\theta_1-\theta_2)k-\theta\theta_1\theta_2)\frac{(\theta+e+1)}{e}.$$
The latter equation implies that
\[ \Big( \theta_2 e+k\theta+k+\theta_2 \theta^2+e\theta_2\theta+\theta_2\theta \Big) \theta_1 =  -k^2e+k\theta_2 e+k (\theta+e+1)(\lambda-\theta-\theta_2).  \]
As $\theta_2 e+k\theta+k+\theta_2 \theta^2+e\theta_2\theta+\theta_2\theta = (\theta+1)(k+e\theta_2+\theta_2\theta) \not= 0$, we thus obtain
\begin{align*}
\theta_1 & =\frac{-k^2e+k\theta_2 e+k (\theta+e+1)(e-1+\frac{(k-\theta-e)(e-1)}{\theta+e}-\theta_2)}{(\theta+1)(k+e\theta_2+\theta_2\theta)}\\
    & =\frac{-k(k\theta+k+\theta_2\theta+\theta_2\theta^2+e\theta_2+e\theta_2\theta)}{(\theta+1)(k+e\theta_2+\theta_2\theta)(e+\theta)}\\
    & =\frac{-k}{e+\theta}.
\end{align*}
This is impossible as $\theta_1 > 0 > -\frac{k}{e+\theta}$.
\end{proof}

\section*{Acknowledgement}
The research of A. Abiad is partially supported by the BOF-UGent (Special Research Fund of Ghent University). The research of J. D’haeseleer is supported by the FWO (Research Foundation Flanders). J.H. Koolen is partially supported by the National Natural Science Foundation of China (No. $11471009$ and No. $11671376$) and Anhui Initiative of Quantum Information Technologies (No. AHY $150000$).


\begin{thebibliography}{99}

\bibitem{bcn} A.E. Brouwer, A.M. Cohen and A. Neumaier. Distance-regular graphs. Springer-Verlag, Berlin, 1989.

\bibitem{cds} D.M. Cvetkovi\'c, M. Doob and H. Sachs. Spectra of graphs. Theory and applications. Third edition. Johann Ambrosius Barth Verlag, Heidelberg, 1995. 

\bibitem{crs} D. Cvetkovi\'c, P. Rowlinson and S. Simi\'c. {\em An introduction to the theory of graph spectra.} London Mathematical Society Student Texts 75. Cambridge University Press, Cambridge, 2010.

\bibitem{cfg} C. Dalf\'o, M.A. Fiol and E. Garriga. On $k$-walk-regular graphs. \emph{Electron. J. Combin.} 16(1) (2009), R47.
		
\bibitem{v1995} E.R. van Dam. Regular graphs with four eigenvalues. \emph{Linear Algebra Appl.} 226--228 (1995), 139--162.
	
\bibitem{bdb-Suz} B. De Bruyn and H. Suzuki. Intriguing sets of vertices of regular graphs. {\em Graphs Combin.} 26(5) (2010), 629--646.
	
\bibitem{Delsarte} P. Delsarte. An algebraic approach to the association schemes of coding theory. \emph{Philips Res. Rep. Suppl.} 10 (1973). 

\bibitem{EGP2019} R.J. Evans, S. Goryainov and D. Panasenko. The smallest strictly Neumaier graph and its generalisations. \emph{Electron. J. Combin.} 26(2) (2019), P2.29.


\bibitem{Sergeyprivate} S. Goryainov, private communication.

\bibitem{GoSha} S. Goryainov and L. Shalaginov. Cayley-Deza graphs with fewer than 60 vertices. \emph{Sib. Electron. Math. Rep.} 11 (2014), 268--310, (in Russian). 

	

 
 
 

 
    
\bibitem{GK2018} G.R.W. Greaves and J.H. Koolen. Edge-regular graphs with regular cliques. \emph{European J. Combin.} 71 (2018), 194--201.

\bibitem{GK2019}  G.R.W. Greaves and J.H. Koolen. Another construction of edge-regular graphs with regular cliques. \emph{Discrete Math.} 342(10) (2019), 2818--2820.		

	
\bibitem{Neumaier1981} A. Neumaier. Regular cliques in graphs and special $1\frac{1}{2}$-designs. In ``Finite geometries and designs'', \emph{Proc. 2nd Isle of Thorns Conf. 1980}, volume 49 of \emph{London Math. Soc. Lecture Note Ser.}, pages 244--259. Cambridge University Press, 1981.
	

\end{thebibliography}
\end{document}